\newtheorem{theorem}{Theorem}[section]
\newtheorem{lemma}[theorem]{Lemma}
\newtheorem{remark}[theorem]{Remark}
\numberwithin{equation}{section}
\newcommand*\patchAmsMathEnvironmentForLineno[1]{%
  \expandafter\let\csname old#1\expandafter\endcsname\csname #1\endcsname
  \expandafter\let\csname oldend#1\expandafter\endcsname\csname end#1\endcsname
  \renewenvironment{#1}%
     {\linenomath\csname old#1\endcsname}%
     {\csname oldend#1\endcsname\endlinenomath}}%
\newcommand*\patchBothAmsMathEnvironmentsForLineno[1]{%
  \patchAmsMathEnvironmentForLineno{#1}%
  \patchAmsMathEnvironmentForLineno{#1*}}%
\title{\bf Evolving convex curves by a generalized length-preserving flow}
\author{\ {\bf Laiyuan Gao\thanks{The corresponding author.} ~~~~Shengliang Pan} }
\date{}
\begin{document}
\maketitle

\noindent {\bf Abstract} This paper deals with a generalized length-preserving flow for convex curves in the plane.
It is shown that the flow exists globally and deforms smooth initial curves into circles as time tends to infinity.
\\\\
\noindent {\bf Keywords} convex curve, length-preserving flow, quasilinear parabolic equation.\\
\noindent {\bf Mathematics Subject Classification (2020) }  {53A04, 53E10, 35K15, 35K59}

\baselineskip 15pt

\section{Introduction}
\setcounter{equation}{0}
Let $X(\phi, t):  S^1 \times [0, \omega)\rightarrow\mathbb{R}^2$ be a family of smooth and closed curves in the plane.
Denote by $\kappa(\phi, t)$ the (relative) curvature with respect to the Frenet frame $\{T(\phi, t), N(\phi, t)\}$ at a point
$(\phi, t)$, where $T(\phi, t)$ and $N(\phi, t)$ are the unit tangent and inward normal vectors, respectively.
An embedded closed $C^2$ curve is called convex if its curvature is positive everywhere.
Suppose $X_0(\phi)$ is a convex curve in the plane.
This paper focuses on a general nonlocal curvature flow for convex curves:
\begin{equation}\label{eq:1.1.201811}
\left\{\begin{array}{ll}
\frac{\partial X}{\partial t}(\phi, t) = \left[F(\kappa)-\frac{1}{2\pi}\int_0^L F(\kappa) \kappa ds\right] N(\phi, t)
     \ \ \textup{in}~ S^1\times(0, \omega),\\\\
X(\phi, 0) = X_0(\phi) \ \ \textup{on}~ S^1,
\end{array} \right.
\end{equation}
where $s$  represents the arc-length parameter, $L=L(t)$ the length of $X(\cdot, t)$ and $F(\cdot)$ a smooth
real-valued function on $(0, +\infty)$.

In recent decades, the curvature flow of curves has attracted extensive attention since 1980s. Besides the well known curve
shortening flow (see \cite{Gage-1983, Gage-1984}, \cite{Gage-Hamilton} and \cite{Grayson-1987})
and its generalizations (see \cite{Andrews-1998, Andrews-2003}), there are also nonlocal flows, such as length-preserving flows by
Pan-Yang \cite{Pan-Yang-2008}, Ma-Zhu \cite{Ma-Zhu-2012} and area-preserving flows by Gage \cite{Gage-1986}, Ma-Cheng \cite{Ma-Cheng-2014},
Mao-Pan-Wang \cite{Mao-Pan-Wang-2013}. Lin and Tsai \cite{Lin-Tsai-2012} summarised previews length-preserving
or area-preserving flows into the model \eqref{eq:1.1.201811}. Let $\alpha > 0$ be a constant.
They especially studied the asymptotic behavior of $\kappa^\alpha$-type and $1/\kappa^\alpha$-type nonlocal flows
as $t\rightarrow +\infty$, provided that the initial curve is convex and the flow exists globally.
Subsequently, Wang and Tsai \cite{Wang-Tsai-2015} demonstrated that $\kappa^\alpha$-type length-preserving or area-preserving flows for convex curves
exist globally and drive the evolving curve into circles as $t \rightarrow + \infty$.
The $\alpha$-homogeneity of $F$ plays an essential role in Wang-Tsai's research. The higher
dimensional similar model with $\alpha$-homogeneity assumption of $F$ has been treated by Andrews and McCoy \cite{Andrews-McCoy-2012}.
Recently, Gao, Pan and Tsai \cite{Gao-Pan-Tsai-2021, Gao-Pan-Tsai-2020-1, Gao-Pan-Tsai-2020-2} studied $1/\kappa^{\alpha}$ type nonlocal flows for convex curves.
A number of blow-up examples of this kind of nonlocal flows have been discovered, and all global flows are proven convergent as time tends to infinity.

Both the $\kappa^\alpha$-type and $1/\kappa^\alpha$-type nonlocal flows are special cases of the flow (\ref{eq:1.1.201811}). Thus, the research
of this generalized model is a continuation of previous studies. The flow (\ref{eq:1.1.201811}) with some concrete $F$ can be defined for general embedded curves,
but it may experience a blow-up in a finite time if the function $F$ or the initial convex curve is not appropriately chosen. One can find
some blow-up examples in \cite{Gage-1986, Mayer-2001} ($X_0$ is embedded but not convex) and in \cite{Gao-Pan-Tsai-2020-1, Tsai-2018} ($X_0$ is convex).
To obtain favorable results of the flow, including its global existence and convergence, certain restrictions on the function $F$ should be imposed.

From this point forward, we assume the function $F$, defined in the interval $(0, +\infty)$, is smooth and satisfies the following three conditions for $u>0$:\\
(i) $F$ is strictly increasing on $(0, +\infty)$, i.e. $F^\prime(u)>0$ and
      $$\lim\limits_{u\rightarrow 0^+} F^\prime (u)\cdot u =0;$$
(ii) $F(u)$ is positive for all $u>0$ and
$$
\lim\limits_{u\rightarrow +\infty} F(u) = +\infty, ~~~~F(0):=\lim\limits_{u\rightarrow 0^+} F(u) =0;
$$
(iii) $F(u)$ and $F^\prime(u)$ satisfy two asymptotic properties
\begin{eqnarray*}
\lim_{u\rightarrow +\infty} \frac{F^\prime (u)\cdot u^2}{F(u)}=+\infty, ~~~~
\lim_{u\rightarrow 0^+} \frac{F^\prime (u)\cdot u^2}{F(u)}=0.
\end{eqnarray*}

Before the statement of our main result, two remarks are in order.
First, in contrast to the evolution problem of hypersurfaces (see \cite{Andrews-1994, Andrews-2007, Gerhardt-1990}, etc.),
there are no additional convexity conditions on the function $F$.
Along with the $\alpha$-homogeneity case of $F$ studied by Wang-Tsai \cite{Wang-Tsai-2015},
there are numerous other examples of $F$ satisfying above conditions (i)-(iii), such as
$$F(\kappa)=\ln (1+\kappa), ~e^\kappa - 1, ~2\kappa+\sin(\kappa), ~\kappa^2 \ln \kappa +\kappa$$ and so on.
Second, the parabolic property $F^\prime > 0$ and the positivity condition $F > 0$ imply the limit given in the condition (i)
(See Remark \ref{rmk:3.1.201811}), if this limit exists.
Therefore, the conditions (i)-(iii) can be summarized as the parabolic property, positivity and five limits of $F$.
The main result of this paper is as follows.
\begin{theorem}\label{thm:1.1.201811}
Let $X_0$ be a smooth and convex plane curve. If $F$ satisfies
the above conditions (i)-(iii) then the flow (\ref{eq:1.1.201811}) exists on time interval $[0, +\infty)$,
keeps the convexity of the evolving curve, preserves its length and deforms $X(\cdot, t)$ into a finite circle
as time tends to infinity.
\end{theorem}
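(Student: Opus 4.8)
\noindent\emph{Proof plan.} My plan is to follow the classical program for curvature flows of convex plane curves --- short-time existence, uniform a priori estimates, long-time existence, subconvergence, then full convergence --- carrying the nonlocal factor $\lambda(t):=\frac{1}{2\pi}\int_0^L F(\kappa)\kappa\,ds$ throughout. Since $X_0$ is convex, the evolving curve stays convex for a short time, so I would reparametrize by the tangent angle $\theta\in S^1$. Writing $\kappa=\kappa(\theta,t)$ and $f=F(\kappa)-\lambda(t)$, the standard identity $\kappa_t=\kappa^2(f_{\theta\theta}+f)$ (derivatives at fixed $\theta$) gives the quasilinear parabolic equation
\[
\kappa_t=\kappa^2F'(\kappa)\,\kappa_{\theta\theta}+\kappa^2F''(\kappa)\,\kappa_\theta^2+\kappa^2F(\kappa)-\lambda(t)\,\kappa^2,
\]
with ellipticity coefficient $\kappa^2F'(\kappa)>0$ by (i); equivalently the support function satisfies $h_t=\lambda(t)-F(\kappa)$ with $\kappa=(h+h_{\theta\theta})^{-1}$. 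Standard quasilinear parabolic theory then provides a unique smooth solution on a maximal interval $[0,\omega)$, convex for short time. Using $\int_0^L\kappa\,ds=2\pi$, I would compute $\frac{dL}{dt}=-\int_0^L\kappa f\,ds=-\int_0^LF(\kappa)\kappa\,ds+2\pi\lambda=0$, so $L\equiv L_0$; and $\frac{dA}{dt}=-\int_0^L f\,ds=\lambda L_0-\int_0^LF(\kappa)\,ds$. Since $F$ is increasing, $F(\kappa)$ and $\kappa$ are similarly ordered along the curve, so Chebyshev's integral inequality gives $\lambda L_0=\frac{L_0}{2\pi}\int_0^LF(\kappa)\kappa\,ds\ge\int_0^LF(\kappa)\,ds$, whence $\frac{dA}{dt}\ge0$, with equality only for circles. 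As $A\le L_0^2/(4\pi)$ by the isoperimetric inequality, $A(t)$ increases to some $A_\infty\le L_0^2/(4\pi)$ and $\int_0^\omega\!\big(\lambda L_0-\int_0^LF(\kappa)\,ds\big)\,dt<\infty$.

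Next I would establish uniform a priori estimates. Fixed perimeter together with $A\ge A(0)>0$ bounds the inradius from below and the circumradius from above; after controlling the drift of an interior point this yields $0<\delta\le h(\theta,t)\le D$ for a fixed origin. The delicate point is the two-sided curvature bound. For the upper bound the naive maximum principle has the wrong sign, since at a spatial maximum of $\kappa$ one only gets $\kappa_t\le\kappa_{\max}^2\big(F(\kappa_{\max})-\lambda\big)$ and $\lambda\le F(\kappa_{\max})$; so, in the spirit of the estimates of Tso and Chou--Zhu for expanding/contracting flows, I would introduce an auxiliary function coupling $F(\kappa)$ to the support function, e.g.\ $Z=F(\kappa)/(h-c)$ with $c\in(0,\delta)$, and show it obeys a parabolic differential inequality whose zeroth-order term becomes negative once $Z$ is large, giving $F(\kappa)\le CD$ and hence, by (ii), a uniform bound $\kappa\le\kappa_2$; this in turn bounds $\lambda(t)$. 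A second maximum-principle argument on a suitable quantity, exploiting the behaviour of $F$ near $0$ recorded in (i), (iii) and Lemma \ref{lem:3.1.201811}, would produce a uniform lower bound $\kappa\ge\kappa_1>0$, i.e.\ quantitative preservation of convexity. Once $0<\kappa_1\le\kappa\le\kappa_2$, the equation for $\kappa$ is uniformly parabolic with bounded coefficients, so Krylov--Safonov together with Schauder bootstrapping give $C^k$ bounds uniform in time for every $k$.

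These estimates preclude singularity formation, so the solution extends past every finite time, $\omega=+\infty$, and the flow is smooth on $[0,\infty)$, convex and of constant length $L_0$. From $\int_0^\infty\!\big(\lambda L_0-\int_0^LF(\kappa)\,ds\big)\,dt<\infty$ there is a sequence $t_n\to\infty$ along which $\lambda L_0-\int_0^LF(\kappa)\,ds\to0$; by the uniform bounds $X(\cdot,t_n)$ subconverges in $C^\infty$ to a smooth convex limit, which by the equality case of Chebyshev's inequality has constant curvature, hence is a circle --- necessarily of circumference $L_0$, so of radius $L_0/(2\pi)$ and area $L_0^2/(4\pi)$. By monotonicity $A(t)\to L_0^2/(4\pi)$, i.e.\ the isoperimetric deficit $L_0^2-4\pi A(t)\to0$. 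To upgrade this to convergence of the whole flow in $C^\infty$ I would show the oscillation of $\kappa$ decays to zero, either directly via a maximum-principle oscillation estimate for $\kappa_{\max}-\kappa_{\min}$, or by linearizing $h_t=\lambda-F(\kappa)$ about the limiting circle --- whose linearized operator, after quotienting by translations, has a spectral gap --- to obtain exponential decay, with interpolation then controlling all higher derivatives. Since $L_0>0$, the limit is a genuine (``finite'') circle.

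I expect the main obstacle to be the pair of uniform-in-time curvature estimates in the second step: the upper bound, where the usual maximum principle fails and one must design a Tso--Chou--Zhu-type auxiliary quantity compatible with the nonlocal term $\lambda(t)$ and with a general $F$ satisfying only (i)--(iii); and the lower bound, i.e.\ quantitative convexity, which for this family of flows is typically the most delicate step and is where the finer behaviour of $F$ near $u=0$ in (iii), and Lemma \ref{lem:3.1.201811}, should come into play.
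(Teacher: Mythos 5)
Your plan follows essentially the same route as the paper: tangent‑angle parametrization and the quasilinear equation for $\kappa$, length preservation and area monotonicity via the Chebyshev/Andrews inequality, Bonnesen‑type control of the support function, a Tso--Chou‑type auxiliary quantity $F(\kappa)/(p-\delta)$ for the uniform upper curvature bound (and a companion quantity $F(\kappa)/(\Delta-p)$, together with the limits in (iii) and Lemma \ref{lem:3.1.201811}, for the uniform lower bound), the equality case of the integral inequality to identify the circular limit, and exponential decay of the speed to get convergence of the curve itself. The only minor divergences are in standard machinery (Krylov--Safonov/Schauder versus the paper's integral estimates and the Lin--Tsai technique for higher regularity, and a linearization/spectral‑gap argument versus the paper's Wirtinger‑type estimate on $\int_0^{2\pi}(\partial F/\partial\theta)^2\,d\theta$), which are interchangeable here.
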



It is extremely challenging to obtain good convergence results of the flow (\ref{eq:1.1.201811}) for generic initial convex curves, if
any one of the three conditions of $F$ is omitted.
The condition (i) is used to ensure the parabolicity of the evolution equation.
In condition (ii), we assume $F$ is a positive function.
Tsai \cite{Tsai-2018} discovered the blow up phenomenon of a convex curve evolving under
(\ref{eq:1.1.201811}) when $F(\kappa)=-1/\kappa$.
The same phenomenon also occurs in some other similar nonlocal models with $F<0$ \cite{Gao-Pan-Tsai-2021, Gao-Pan-Tsai-2020-1, Gao-Pan-Tsai-2020-2}.
Hence, without condition (ii), this flow may blow up a convex curve in finite time.
The third condition can be used to uniformly bound the curvature both above and below.
The proof depends on a delicate application of the maximum principle for two geometric auxiliary functions,
which are inspired by Chou's early work \cite{Chou-1985} and its developments \cite{Andrews-McCoy-2012},
\cite{Gao-Zhang-2019} and \cite{Wang-Tsai-2015}.
Without the uniform bounds of curvature, the convergence of the flow cannot be expected.

Lin and Tsai in \cite{Lin-Tsai-2012} proposed that one might consider the flow (1.1) under the assumption that $F(u)/u$ is increasing if $u > 0$.
This condition implies the parabolicity condition $F^\prime > 0$ when $F(u) > 0$ for all positive $u$. It remains an open question whether or not Lin-Tsai's
condition leads to the convergence of the flow (\ref{eq:1.1.201811}).
For flows of other type of curves, one can consult Wang-Kong \cite{Wang-Kong-2014}, Wang-Wo \cite{Wang-Wo-2014}, Wang-Wo-Yang \cite{Wang-Wo-Yang-2018}.
One can read the books \cite{Chou-Zhu-2001, Mantegazza-2011, Zhu-2002}
and references therein for more information about the curvature flows of curves and some higher dimensional analogues.

The global existence of the flow (\ref{eq:1.1.201811}) is proven in Section \ref{sec:2.201811}.
The convergence of the curvature is demonstrated in Section \ref{sec:3.201811} and the convergence of the curve is presented in Section
\ref{sec:4.201811}.

\section{Global existence}\label{sec:2.201811}
In this section, the global existence of the flow (\ref{eq:1.1.201811}) is proven.
It is shown that the evolving curve keeps convex and its curvature has a uniform upper bound,
which is independent of time. So the flow can be extended on time interval $[0, +\infty)$.

\subsection{Short time existence}\label{sec:2.1.201811}
Let $\theta=\theta(\varphi, t)$ be the tangent angle of the evolving curve. Under the flow (\ref{eq:1.1.201811}),
$\theta$ varies with respect to time. One can make it independent of time by adding a proper tangent component to the
original flow to obtain
\begin{equation}\label{eq:2.1.201811}
\left\{\begin{array}{ll}
\frac{\partial \widetilde{X}}{\partial t} = -\frac{1}{\kappa}\frac{\partial F}{\partial s} T+ (F - \lambda) N
     \ \ \textup{in}~ S^1\times(0, \omega),\\\\
\widetilde{X}(\varphi, 0) = X_0(\varphi) \ \ \textup{on}~ S^1.
\end{array} \right.
\end{equation}
If there is a family of convex curves evolving under the flow (\ref{eq:1.1.201811}), then (\ref{eq:2.1.201811}) also has a
solution and it differs from $X(\cdot, t)$ by a reparameterization and a translation \cite{Chou-Zhu-2001}.
By straightforward computations (see for example, \cite{Chou-1985}), the tangent angle under the flow (\ref{eq:2.1.201811}) satisfies that
\begin{eqnarray*}
\frac{\partial \theta}{\partial t} = \left(-\frac{1}{\kappa} \frac{\partial F}{\partial s} \right) \kappa + \frac{\partial }{\partial s} (F- \lambda) =0.
\end{eqnarray*}
So $\theta$ is independent of time. It can be used as the parameter of the evolving curve $X(\cdot, t)$.
Due to this reason, we consider the flow (\ref{eq:2.1.201811}) from now on.

Since the curvature determines the curve up to a transformation via the Euclidean group, the short time existence of the flow (\ref{eq:2.1.201811})
can be reduced to the evolution equation of the curvature $\kappa$, i.e.,
\begin{eqnarray}\label{eq:2.2.201811}
\frac{\partial \kappa}{\partial t}= \kappa^2 \left[F^\prime (\kappa) \frac{\partial^2 \kappa}{\partial \theta^2}
         + F^{\prime\prime}(\kappa) \left(\frac{\partial \kappa}{\partial \theta}\right)^2
         + F(\kappa)-\frac{1}{2\pi}\int_0^{2\pi} F(\kappa) d\theta\right],
\end{eqnarray}
where the initial curvature $\kappa_0 (\theta)= \kappa (\theta, 0)$ satisfies the closing condition
\begin{eqnarray}\label{eq:2.3.201811}
\int_0^{2\pi} \frac{e^{i\theta}}{\kappa_0(\theta)} d\theta =0.
\end{eqnarray}

The equation (\ref{eq:2.2.201811}) is a quasilinear parabolic equation. Its linearization at a smooth function $u(\theta, t) = \kappa_0(\theta) + tu_0(\theta)$ is as follows
\begin{eqnarray*}
\frac{\partial \kappa}{\partial t} &=& u^2 F^\prime (u)\frac{\partial^2 \kappa}{\partial \theta^2}
   + 2u^2 F^{\prime\prime}(u) \frac{\partial u}{\partial \theta}\frac{\partial \kappa}{\partial \theta}
\\
&&+2u\left[F^\prime (u) \frac{\partial^2 u}{\partial \theta^2}+ F^{\prime\prime}(u) \left(\frac{\partial u}{\partial \theta}\right)^2 + F(u)\right] \kappa
\\
&&+u^2 \left[F^{\prime\prime}(u) \frac{\partial^2 u}{\partial \theta^2}+ F^{\prime\prime\prime}(u) \left(\frac{\partial u}{\partial \theta}\right)^2 + F^{\prime}(u)\right] \kappa
\\
&&- \frac{u}{\pi} \left(\int_0^{2\pi} F(u) d\theta\right) \kappa - \frac{u^2}{2\pi}\int_0^{2\pi} F^{\prime}(u)\kappa d\theta.
\end{eqnarray*}
The smooth function $\kappa_0(\theta)$ is positive everywhere, so the linearization of (\ref{eq:2.2.201811}) at $u_0$ with small $t>0$ is uniformly parabolic.
It follows from the linearization method \cite{Chou-Zhu-2001, Gao-Zhang-2019} that the Cauchy problem (\ref{eq:2.2.201811})
with initial value $\kappa_0$ has a unique smooth solution $\kappa (\theta, t)$ which satisfies the closing condition (\ref{eq:2.3.201811}) on some time interval.
Thus the flow (\ref{eq:2.1.201811}) and also the flow (\ref{eq:1.1.201811}) exist on a short time interval.

\begin{lemma}\label{lem:2.1.201811}
There is a unique family of smooth convex curves evolving under the flow (\ref{eq:1.1.201811}) on a time interval
$[0, t)$, where $t$ is positive.
\end{lemma}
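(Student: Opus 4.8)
The plan is to work with the modified flow \eqref{eq:2.1.201811} instead of \eqref{eq:1.1.201811}, exploiting the fact—already recorded above—that the two flows differ only by a time-dependent tangential reparameterization together with a translation, so that existence and uniqueness for one is equivalent to existence and uniqueness for the other. The gain from \eqref{eq:2.1.201811} is that the tangent angle $\theta$ is frozen in time and may be used as a fixed spatial parameter; the entire geometry of the evolving curve is then carried by the scalar $\kappa(\theta,t)$, which satisfies the quasilinear equation \eqref{eq:2.2.201811} with the single nonlocal but purely lower-order coefficient $\lambda(t)=\frac{1}{2\pi}\int_0^{2\pi}F(\kappa)\,d\theta$ (this is exactly $\frac{1}{2\pi}\int_0^L F(\kappa)\kappa\,ds$ after substituting $ds=d\theta/\kappa$). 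Thus the problem splits into three steps: (a) solve \eqref{eq:2.2.201811} for a short time with initial datum $\kappa_0$; (b) verify that the reconstructed curve is closed and convex; (c) transport the solution back to \eqref{eq:1.1.201811}.

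For step (a) I would run the linearization argument sketched above: freeze the leading coefficient at $u=\kappa_0+tu_0$, with $u_0$ smooth and taking values in a compact subinterval of $(0,\infty)$, so that on a thin slab $S^1\times[0,t_*)$ one has $\kappa_0+tu_0\in(m_0/2,2M_0)$ with $m_0=\min_\theta\kappa_0>0$ (positivity of $m_0$ is precisely the convexity of $X_0$). On that slab $u^2F'(u)$ is continuous and, by condition (i), strictly positive, hence bounded between two positive constants, so the linearized operator is uniformly parabolic with smooth coefficients; the nonlocal term $\lambda(t)$, being a smooth function of $t$ obtained by integrating a smooth function of $\kappa$, is a harmless perturbation of lower order. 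The standard scheme for quasilinear parabolic equations (e.g. \cite{Chou-Zhu-2001, Gao-Zhang-2019}) then produces a unique smooth solution $\kappa(\theta,t)$ on a maximal interval $[0,\omega)$ with $\omega>0$, and, after shrinking $\omega$ if necessary, continuity from $\kappa(\cdot,0)=\kappa_0>0$ forces $\kappa(\cdot,t)>0$, which is convexity at the level of curvature.

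The one nonroutine point is step (b): the closing condition \eqref{eq:2.3.201811} must be shown to propagate, for otherwise $\kappa$ does not correspond to a genuine closed curve. I would set $w(t):=\int_0^{2\pi}e^{i\theta}\kappa(\theta,t)^{-1}\,d\theta$, differentiate in time, and substitute \eqref{eq:2.2.201811} after noting $\kappa_t/\kappa^2=\partial_\theta^2 F(\kappa)+F(\kappa)-\lambda$; integrating $e^{i\theta}\partial_\theta^2 F(\kappa)$ by parts twice over $S^1$ turns it into $-e^{i\theta}F(\kappa)$, which cancels the $+F(\kappa)$ term, while $\lambda\int_0^{2\pi}e^{i\theta}\,d\theta=0$, so $w'(t)\equiv 0$ and hence $w(t)\equiv w(0)=0$. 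This exact cancellation is what the nonlocal coefficient $\lambda$—and the tangential correction built into \eqref{eq:2.1.201811}—are engineered to produce. With \eqref{eq:2.3.201811} in force for all $t\in[0,\omega)$ and $\kappa>0$, one recovers the embedded convex curve $\widetilde X(\cdot,t)$ from $\kappa(\cdot,t)$ by integrating the Frenet equations, and finally undoes the reparameterization and translation to obtain a solution of \eqref{eq:1.1.201811}; uniqueness is inherited from uniqueness for \eqref{eq:2.2.201811}. I expect step (b)—keeping track of the closing condition—to be the only place requiring more than routine parabolic bookkeeping, and even there the computation is short.
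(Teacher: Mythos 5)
Your proposal is correct and follows essentially the same route as the paper: pass to the tangent-angle formulation (\ref{eq:2.1.201811}), reduce to the curvature equation (\ref{eq:2.2.201811}), and apply the linearization method with the leading coefficient frozen at $u=\kappa_0+tu_0$ so that $u^2F'(u)$ is pinched between positive constants on a thin slab, yielding a unique smooth short-time solution. Your explicit verification that $w(t)=\int_0^{2\pi}e^{i\theta}\kappa(\theta,t)^{-1}\,d\theta$ is conserved (via the double integration by parts that cancels $\partial_\theta^2F+F$ against $e^{i\theta}$ and the vanishing of $\lambda\int_0^{2\pi}e^{i\theta}\,d\theta$) is correct and is a useful addition, since the paper only asserts that the solution satisfies the closing condition (\ref{eq:2.3.201811}) without carrying out this check.
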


Using the maximum principle, one can show that $\kappa$ is always positive under the flow.
So the flow (\ref{eq:2.1.201811}) preserves the convexity of the evolving curve.

\begin{lemma}\label{lem:2.2.201811}
Under the flow (\ref{eq:2.1.201811}), the evolving curve is convex.
\end{lemma}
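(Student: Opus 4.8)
The plan is to reformulate the assertion as ``$\kappa(\theta,t)>0$ on all of $S^1\times[0,\omega)$'' and to prove it by a maximum principle applied to the curvature equation \eqref{eq:2.2.201811}, extracting along the way an explicit lower bound for $\kappa$. Fix any $T<\omega$. By Lemma \ref{lem:2.1.201811} the solution $\kappa$ is smooth on the compact set $S^1\times[0,T]$, so $M_T:=\max_{S^1\times[0,T]}\kappa$ is finite, while $m(t):=\min_{\theta\in S^1}\kappa(\theta,t)$ is a Lipschitz function of $t$ with $m(0)=m_0>0$.

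The first step is Hamilton's trick for $m(t)$: at a.e.\ $t$, choose $\theta_t$ where the spatial minimum is attained, so that $\partial_\theta\kappa(\theta_t,t)=0$ and $\partial_\theta^2\kappa(\theta_t,t)\ge 0$. Evaluating \eqref{eq:2.2.201811} at $(\theta_t,t)$, the $F''(\kappa)(\partial_\theta\kappa)^2$ term vanishes, and the parabolicity hypothesis (i), $F'>0$, makes $F'(\kappa)\,\partial_\theta^2\kappa\ge 0$, so that term may be discarded. Since $\frac{d}{dt}m(t)=\partial_t\kappa(\theta_t,t)$ at points of differentiability, this yields
\[
\frac{d}{dt}m(t)\ \ge\ m(t)^2\Big(F\big(m(t)\big)-\bar F(t)\Big),\qquad \bar F(t):=\frac{1}{2\pi}\int_0^{2\pi}F\big(\kappa(\theta,t)\big)\,d\theta .
\]

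The second step is to control the nonlocal average $\bar F(t)$. Because $F$ is increasing, $F(m(t))=\min_\theta F(\kappa(\cdot,t))\le\bar F(t)\le F(M_T)$, and because $F>0$ by hypothesis (ii) we get $0\le\bar F(t)-F(m(t))\le F(M_T)$. Hence, using $m(t)\le M_T$,
\[
\frac{d}{dt}m(t)\ \ge\ -F(M_T)\,m(t)^2\ \ge\ -\,M_T F(M_T)\,m(t)\qquad\text{for a.e.\ }t\in[0,T],
\]
and Gronwall's inequality gives $m(t)\ge m_0\,e^{-M_T F(M_T)\,t}>0$ on $[0,T]$. As $T<\omega$ was arbitrary, $\kappa>0$ on all of $S^1\times[0,\omega)$, i.e.\ the evolving curve stays convex.

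The only real obstacle I anticipate is the nonlocal term $\bar F(t)$: at a spatial minimum it enters \eqref{eq:2.2.201811} with the ``wrong'' sign, so a direct comparison with the corresponding local curvature flow fails. The resolution rests on two points — that its contribution is bounded by the oscillation $F(M_T)-F(m(t))$, which hypotheses (i)--(ii) keep finite on each finite time interval, and that the quadratic prefactor $m(t)^2$ lets this bound be absorbed into a linear Gronwall term; note that condition (iii) is not needed here (it is used later for uniform curvature bounds). As an alternative one could rewrite \eqref{eq:2.2.201811} as a linear, uniformly parabolic equation $\partial_t\kappa=a\,\partial_\theta^2\kappa+b\,\partial_\theta\kappa+c\,\kappa$ on $S^1\times[0,T]$ with bounded coefficients and $a=\kappa^2F'(\kappa)>0$, and invoke the parabolic maximum principle, but that route does not directly produce the quantitative lower bound above.
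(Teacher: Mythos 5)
Your proof is correct and rests on the same core mechanism as the paper's: evaluate the curvature equation at the spatial minimum, discard the favourable second-order and gradient terms using $F'>0$, bound the nonlocal term by $\lambda(t)\le F(M)$ via monotonicity and positivity of $F$, and reduce everything to an ODE comparison. The packaging differs: the paper constructs an explicit barrier $f(t)=\frac{m_0/2}{F(M)(m_0/2)\,t+1}$ solving $f'=-F(M)f^2$ and runs a maximum-principle argument on $h=\kappa-f+\varepsilon t$, whereas you use Hamilton's trick on $m(t)=\min_\theta\kappa$ plus Gronwall. Two remarks. First, your linearization $-F(M_T)\,m^2\ge -M_TF(M_T)\,m$ is only valid while $m\ge 0$ (and $F(\kappa)$, hence equation \eqref{eq:2.2.201811} itself, is only defined while $\kappa>0$), so the argument should formally be run on $[0,t_0)$ with $t_0$ a hypothetical first zero of $m$ and the contradiction obtained by continuity at $t_0$; this is exactly how the paper frames it, and your ``fix $T<\omega$, $\kappa$ smooth on $[0,T]$'' setup quietly presupposes part of what is being proved. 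Second, if you integrate the Riccati inequality $m'\ge -F(M_T)\,m^2$ directly instead of linearizing, you recover the paper's sharper lower bound $m(t)\ge m_0/(1+F(M_T)m_0t)$; the paper explicitly remarks that this $O(1/t)$ decay is better than the exponential bound you end with, though for positivity on finite time intervals either suffices. Your observation that condition (iii) plays no role in this lemma is correct.
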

\begin{proof}
Suppose that the flow (\ref{eq:2.1.201811}) exits on time interval $[0, \omega)$.
By the continuity of the evolving curve, it is strictly convex in a small time interval.
Suppose there is a positive $t_0 <\omega$ such that $\min_\theta \kappa(\theta) (t_0)=0$ but $X(\cdot, t)$ is strictly convex when $t\in [0, t_0)$.

By the continuity of $\kappa$ on $[0, 2\pi]\times [0, t_0]$, there exists a positive constant $M$ such that $\max\{\kappa(\theta, t)| \theta \in [0, 2\pi], t\in [0, t_0]\} \leq M$.
Define a function on the interval $[0, \omega)$ as follows
\begin{eqnarray*}
f(t)=\frac{\min\limits_\theta \kappa_0(\theta)}{F(M)\min\limits_\theta \kappa_0(\theta)t+1}.
\end{eqnarray*}
The positive function $f$ satisfies that $f(0)=\min\limits_\theta \kappa_0(\theta)\leq \kappa_0(\theta)$ and $f^\prime (t)=-F(M)f^2(t)$.
Define $h (\theta, t)=\kappa(\theta, t)-f(t)+A\varepsilon t + \varepsilon$ on the domain $[0, 2\pi] \times [0, t_0)$,
where $\varepsilon >0$ and $A = F(M)(M + \min\limits_\theta \kappa_0(\theta))$. By the evolution equation of $\kappa$,
\begin{eqnarray*}
\frac{\partial h}{\partial t} &=& \kappa^2 \left[F^\prime (\kappa) \frac{\partial^2 h}{\partial \theta^2}
            +F^{\prime\prime}(\kappa) \left(\frac{\partial h}{\partial \theta}\right)^2\right]
            \\
            && +\kappa^2 F(\kappa) -\kappa^2 \frac{1}{2\pi}\int_0^{2\pi} F(\kappa) d\theta +F(M)f^2(t) + A\varepsilon.
\end{eqnarray*}

Next, we prove that $h(\theta, t) > \frac{\varepsilon}{2}$ holds for all $(\theta, t)\in [0, 2\pi]\times [0, t_0)$. Suppose there is a smallest $t_1 \in (0, t_0)$ such
that $\min\limits_\theta h(\theta, t_1) = \frac{\varepsilon}{2}$ and $h(\theta, t) > \frac{\varepsilon}{2}$ holds for all $(\theta, t)\in [0, 2\pi]\times [0, t_1)$.
By the continuity of $h$, there exists $\theta_1 \in [0, 2\pi]$ such that $h(\theta_1, t_1) = \min\limits_\theta h(\theta, t_1)$. At the point $(\theta_1, t_1)$, we have
$\frac{\partial^2 h}{\partial \theta^2}(\theta_1, t_1) \geq 0, \frac{\partial h}{\partial \theta}(\theta_1, t_1) =0$ and
$$\frac{\partial h}{\partial t}(\theta_1, t_1) = \lim_{t \rightarrow t_1^-} \frac{h(\theta_1, t) - h(\theta_1, t_1)}{t- t_1} \leq 0.$$
However, at the same point $(\theta_1, t_1)$, the R.H.S. of the evolution equation of $h$ satisfies
\begin{eqnarray*}
&&\kappa^2 F(\kappa) -\kappa^2 \frac{1}{2\pi}\int_0^{2\pi} F(\kappa)  d\theta +F(M)f^2(t_1) + A\varepsilon\\
&&\geq 0 -\kappa^2 F(M) +F(M)f^2(t_1) + A\varepsilon
\\
&&=  -F(M) (\kappa+f(t_1))(h-A\varepsilon t_1 - \varepsilon) + A\varepsilon
\\
&&=  -F(M) (\kappa+f(t_1)) \frac{\varepsilon}{2} + A\varepsilon + F(M) (\kappa+f(t_1))(A\varepsilon t_1 + \varepsilon)
\\
&&\geq  -F(M)(M+ \min_\theta \kappa_0(\theta))\frac{\varepsilon}{2} + A\varepsilon >0.
\end{eqnarray*}
This is contradict to the previous inequality $\frac{\partial h}{\partial t}(\theta_1, t_1) \leq 0$.
Therefore, we have proven the claim of $h$, which implies $\kappa(\theta, t)-f(t)+A\varepsilon t > \varepsilon/2$, if $(\theta, t)\in [0, 2\pi]\times [0, t_0)$.
By the arbitrariness of $\varepsilon$, one obtains that $\kappa (\theta, t) \geq f(t)$ holds on the domain $[0, 2\pi] \times [0, t_0)$.
The continuity of $\kappa$ tells us
$$\kappa (\theta, t_0) \geq f(t_0) = \frac{\min\limits_\theta \kappa_0(\theta)}{F(M)\min\limits_\theta \kappa_0(\theta)t_0+1} >0.$$
So the original assumption that $\kappa$ attains 0 at the time $t_0$ is not correct. The curvature is always positive for $t\in [0, \omega)$.
\end{proof}

The proof of Lemma \ref{lem:2.2.201811} is inspired by the previous work \cite{Jiang-Pan-2008, Mao-Pan-Wang-2013}. Since the evolving curve is always
uniformly convex under the flow (\ref{eq:2.1.201811}), from now on, one can use the tangent angle $\theta$ to parameterize the curve $\widetilde{X}(\cdot, t)$.

\subsection{Uniform bounds of the support function}\label{sec:2.2.201811}
Denote by $p:=-\langle \widetilde{X}, N\rangle$ the support function of the evolving curve. This function
plays an essential role in the proof of the global existence of the flow (\ref{eq:1.1.201811}).

\begin{lemma}\label{lem:2.3.201811}
Under the flow (\ref{eq:2.1.201811}), the length of $\widetilde{X}(\cdot, t)$ is preserved and the
bounded area $A(t)$ is increasing.
\end{lemma}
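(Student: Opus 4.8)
The plan is to pass to the reparametrized flow (\ref{eq:2.1.201811}), under which the tangent angle $\theta$ is a genuine, time-independent parameter of the evolving curve (strictly convex by Lemma~\ref{lem:2.2.201811}), and to track its support function $h(\theta,t)$. Since the outward unit normal $\nu(\theta)$ does not move in time and the tangential term $-\tfrac1\kappa\frac{\partial F}{\partial s}T$ is orthogonal to $\nu$, one gets directly
$$\frac{\partial h}{\partial t}=\Big\langle \frac{\partial\widetilde X}{\partial t},\,\nu\Big\rangle=-\big(F(\kappa)-\lambda\big),\qquad \lambda=\lambda(t)=\frac{1}{2\pi}\int_0^{2\pi}F(\kappa)\,d\theta,$$
where $\kappa\,ds=d\theta$ has been used to rewrite the nonlocal term. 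Invoking the classical identities $h+h_{\theta\theta}=1/\kappa$ and $L(t)=\int_0^{2\pi}h\,d\theta$, and using $\int_0^{2\pi}F(\kappa)\,d\theta=2\pi\lambda$ by the definition of $\lambda$, I would first differentiate the length:
$$L'(t)=\int_0^{2\pi}\frac{\partial h}{\partial t}\,d\theta=2\pi\lambda-\int_0^{2\pi}F(\kappa)\,d\theta=0,$$
so $L$ is preserved.

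Next I would differentiate the area $A(t)=\tfrac12\int_0^{2\pi}h(h+h_{\theta\theta})\,d\theta$. Integrating by parts twice (the periodic boundary terms cancel) collapses the derivative to $A'(t)=\int_0^{2\pi}\frac{\partial h}{\partial t}(h+h_{\theta\theta})\,d\theta$, and using $h+h_{\theta\theta}=1/\kappa$ together with $\int_0^{2\pi}\frac{d\theta}{\kappa}=L$ this becomes
$$A'(t)=\int_0^{2\pi}\frac{\lambda-F(\kappa)}{\kappa}\,d\theta=\lambda L-\int_0^{2\pi}\frac{F(\kappa)}{\kappa}\,d\theta.$$

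The only substantive point is then to show $A'(t)\ge 0$, i.e.
$$\Big(\int_0^{2\pi}F(\kappa)\,d\theta\Big)\Big(\int_0^{2\pi}\frac{d\theta}{\kappa}\Big)\ \ge\ 2\pi\int_0^{2\pi}\frac{F(\kappa)}{\kappa}\,d\theta,$$
which is a Chebyshev-type correlation inequality and is exactly where condition~(i), $F'>0$, enters: because $F$ is increasing while $u\mapsto 1/u$ is decreasing, the two factors $F(\kappa(\theta))-F(\kappa(\phi))$ and $\frac{1}{\kappa(\theta)}-\frac{1}{\kappa(\phi)}$ always carry opposite signs, so
$$\int_0^{2\pi}\!\!\int_0^{2\pi}\big(F(\kappa(\theta))-F(\kappa(\phi))\big)\Big(\frac{1}{\kappa(\theta)}-\frac{1}{\kappa(\phi)}\Big)\,d\theta\,d\phi\ \le\ 0;$$
expanding this product term by term and using the normalization of $\lambda$ reproduces precisely the displayed inequality, with equality only when $\kappa$ is constant, i.e. the curve is a circle. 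Hence $A'(t)\ge 0$, which completes the lemma. The main obstacle is recognizing this Chebyshev inequality (and keeping the monotonicity and sign bookkeeping straight); everything else is routine first-variation calculus for convex curves, and one could equally obtain the same two formulas for $L'$ and $A'$ from $\frac{\partial}{\partial t}(ds)=-\kappa(F-\lambda)\,ds$ and the first variation of the enclosed area.
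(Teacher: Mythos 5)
Your proposal is correct and follows essentially the same route as the paper: both reduce the lemma to the first-variation formulas $L'(t)=-\int_0^{2\pi}(F(\kappa)-\lambda)\,d\theta=0$ and $A'(t)=\lambda L-\int_0^{2\pi}F(\kappa)/\kappa\,d\theta$ (you via the support function, the paper by direct computation in arc length; the two formulas agree under $d\theta=\kappa\,ds$), and both finish by the same monotonicity inequality. The one genuine difference is at the last step: the paper simply cites Andrews' inequality (Lemma I3.3 of \cite{Andrews-1998}), whereas you prove the needed inequality
$\bigl(\int F(\kappa)\,d\theta\bigr)\bigl(\int \kappa^{-1}\,d\theta\bigr)\ge 2\pi\int F(\kappa)\kappa^{-1}\,d\theta$
from scratch via the Chebyshev correlation trick, using only that $F$ is increasing and $u\mapsto 1/u$ is decreasing; this makes the argument self-contained (and identifies the equality case as $\kappa\equiv\mathrm{const}$), at the cost of re-deriving what is in effect exactly the cited lemma.
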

\begin{proof}
The proof is a direct computation (see also Remark 2.13 in the paper \cite{Lin-Tsai-2012}). Under the flow (\ref{eq:2.1.201811}), the length of the
evolving curve and its bounded area evolve according to next equations respectively.
\begin{eqnarray*}
&&\frac{dL}{dt}=-\int_0^{2\pi} (F(\kappa)-\lambda(t))d\theta
     =-\int_0^{2\pi} \left[F(\kappa)-\frac{1}{2\pi}\int_0^{2\pi} F(\kappa)d\theta \right]d\theta \equiv 0,\\
&&\frac{dA}{dt}=-\int_0^{L} (F(\kappa)-\lambda(t))ds
     =-\int_0^{L} F(\kappa) ds + \frac{L}{2\pi}\int_0^{L} F(\kappa)\kappa ds.
\end{eqnarray*}
Since $F$ is strictly increasing, Andrews' inequality (Lemma I3.3 of the paper \cite{Andrews-1998}) implies that the
area $A(t)$ is increasing.
\end{proof}

Let $I$ be the isoperimetric ratio of a convex curve, i.e., $I=\frac{L^2}{4\pi A}$. For this curve,
Bonnesen's inequality \cite{Osserman-1979} says
\begin{eqnarray*}
rL-A-\pi r^2 \geq 0,  ~~~r\in [r_{in}, r_{out}],
\end{eqnarray*}
where $r_{in}$ and $r_{out}$ are the inradius and outradius
(circumradius) of the convex domain bounded by $X$, respectively.
It follows from Bonnesen's inequality,
\begin{eqnarray*}
\frac{L-\sqrt{L^2-4\pi A}}{2\pi} \leq r_{in} \leq r_{out} \leq \frac{L+\sqrt{L^2-4\pi A}}{2\pi},
\end{eqnarray*}
which implies
\begin{eqnarray*}
\frac{r_{out}}{r_{in}} \leq \frac{L+\sqrt{L^2-4\pi A}}{L-\sqrt{L^2-4\pi A}} =
\left(\sqrt{I}+\sqrt{I-1}\right)^2.
\end{eqnarray*}

Lemma \ref{lem:2.3.201811} tells us that the function $I(t)$ is decreasing under the flow (\ref{eq:1.1.201811}).
In particular, we have
\begin{eqnarray*}
\frac{r_{out}(t)}{r_{in}(t)} \leq \left(\sqrt{I(0)}+\sqrt{I(0)-1}\right)^2, ~~t\in[0, \omega).
\end{eqnarray*}
Noticing that $r_{out}(t) \geq \frac{L}{2\pi}$, we obtain a uniform lower bound of $r_{in}(t)$ as follows
\begin{eqnarray}\label{eq:2.4.201811}
r_{in}(t) \geq \frac{L}{2\pi}\left(\sqrt{I(0)}+\sqrt{I(0)-1}\right)^{-2}>0,  ~~t\in[0, \omega).
\end{eqnarray}
Let $O$ be the center of a maximum inscribed circle of $X_0$. Let $E_0$ be a circle with fixed radius defined by
$$r_0:=\frac{L}{2\pi}\left(\sqrt{I(0)}+\sqrt{I(0)-1}\right)^{-2}$$
and centred at $O$. Shrink $E_0$ via next flow
\begin{equation}\label{eq:2.5.201811}
\left\{\begin{array}{ll}
\frac{\partial Y}{\partial t}= F(\widetilde{\kappa}) N_{in},\\\\
Y(\cdot, 0) = E_0,
\end{array} \right.
\end{equation}
where $\widetilde{\kappa}$ is the curvature of $Y$. By the maximum principle, $Y(\cdot, t)$  is a family of circles.
Its radius $r=r(t)$ satisfies
\begin{eqnarray*}
\frac{dr}{dt}=-F\left(\frac{1}{r}\right).
\end{eqnarray*}
If $r(t)$ is positive then $F\left(\frac{1}{r(t)}\right)>0$. By the evolution equation of $r(t)$, this function is strictly decreasing.
Define
\begin{equation}\label{eq:2.6.201811}
T_1:=r_0\bigg/\left[2F\left(\frac{2}{r_0}\right)\right].
\end{equation}
We will show that, if $t\in [0, T_1)$, then
\begin{eqnarray*}
r(t)>\frac{r_0}{2}.
\end{eqnarray*}
Assume that there is a positive $\widetilde{t} \in (0, T_1)$ such that $r(t)>\frac{r_0}{2}$ holds
for $t\in (0, \widetilde{t})$ and $r(\widetilde{t}) = \frac{r_0}{2}$. Since $F$ is increasing,
for time $t\in (0, \widetilde{t})$, one obtains
\begin{eqnarray*}
F\left(\frac{1}{r(t)}\right) < F\left(\frac{1}{\frac{r_0}{2}}\right) = F\left(\frac{2}{r_0}\right).
\end{eqnarray*}
So, on the time interval $[0, \widetilde{t})$, the radius function satisfies
\begin{eqnarray*}
\frac{dr}{dt} > -F\left(\frac{2}{r_0}\right).
\end{eqnarray*}
Integrating this inequality on time interval $[0, t)$ gives us
\begin{eqnarray}\label{eq:2.7.201811}
r(t) \geq r_0-F\left(\frac{2}{r_0}\right)t.
\end{eqnarray}
Let $t$ tend to $\widetilde{t}$. By the continuity of $r(t)$ and the definitions of $\widetilde{t}$ and $T_1$, we have
\begin{eqnarray*}
r(\widetilde{t}) \geq r_0-F\left(\frac{2}{r_0}\right)\widetilde{t}
> r_0 - F\left(\frac{2}{r_0}\right)T_1
=  \frac{r_0}{2}.
\end{eqnarray*}
This conflicts the existence of $\widetilde{t}$. Therefore, $r(t)\geq \frac{r_0}{2}$
if $t\in [0, T_1]$.

Since $X(\cdot, t)$ contains $Y(\cdot, t)$ for each $t\in \left[0, \min\left\{T_1, \omega\right\}\right)$, the support function has a lower bound
\begin{eqnarray*}
p(\theta, t) \geq r(t) \geq r(T_1)\geq \frac{r_0}{2}
\end{eqnarray*}
in the same time interval. Setting $\delta =\frac{r_0}{4}>0$ and by the definition of the support function, we have, on the domain $[0, 2\pi] \times \big[0, \min\left\{T_1, \omega\right\}\big)$,
next estimate
\begin{eqnarray}\label{eq:2.8.201811}
2\delta \leq p(\theta, t) \leq \frac{L}{2},
\end{eqnarray}
where the constants $\delta$ and $L$ only rely on the initial curve $X_0$.

\subsection{Extending the flow globally}\label{sec:2.3.201811}
 We argue by contradiction. Suppose the flow (\ref{eq:1.1.201811}) exists on the maximal time interval $[0, \omega)$ and $\omega$ is finite.
 Let $n$ be the unique nonnegative integer such that $nT_1 < \omega \leq (n+1)T_1$, where $T_1$ is defined in the equation (\ref{eq:2.6.201811}).
On each time interval $$[0, T_1), [T_1, 2T_1), \cdots, [nT_1, \omega),$$
one can choose an original point $O$ such that the support function $p(\theta, t)$ with respect to $O$ satisfying (\ref{eq:2.8.201811}).

On the above each time interval, let us consider the function
$$\varphi (\theta, t):= \frac{F(\kappa(\theta, t))}{p(\theta, t)-\delta}.$$
By some straightforward calculations, we have
\begin{eqnarray*}
\frac{\partial \varphi}{\partial \theta} = \frac{1}{p-\delta}\frac{\partial F}{\partial \theta}
   -\frac{F}{(p-\delta)^2}\frac{\partial p}{\partial \theta},
\end{eqnarray*}
and
\begin{eqnarray*}
\frac{\partial^2 \varphi}{\partial \theta^2} = \frac{1}{p-\delta}\frac{\partial^2 F}{\partial \theta^2}
   -\frac{2}{(p-\delta)^2}\frac{\partial F}{\partial \theta}\frac{\partial p}{\partial \theta}
   -\frac{F}{(p-\delta)^2}\frac{\partial^2 p}{\partial \theta^2}
   +\frac{2F}{(p-\delta)^3}\left(\frac{\partial p}{\partial \theta}\right)^2.
\end{eqnarray*}
By the evolution equation of $\kappa$, the function $F$ evolves according to
\begin{eqnarray}\label{eq:2.9.201811}
\frac{\partial F}{\partial t} = F^\prime \kappa^2 \left[\frac{\partial^2 F}{\partial \theta^2} +F-\lambda(t) \right].
\end{eqnarray}
It follows from the definition of the support function and the flow equation (\ref{eq:2.1.201811}), the support function satisfies
\begin{eqnarray}\label{eq:2.10.201811}
\frac{\partial p}{\partial t} =\lambda(t)-F.
\end{eqnarray}
Direct calculations can give us the evolution equation of $\varphi (\theta, t)$:
\begin{eqnarray*}
\frac{\partial \varphi}{\partial t} &=& \frac{1}{p-\delta} \frac{\partial F}{\partial t} - \frac{F}{(p-\delta)^2} \frac{\partial p}{\partial t}
\\
&=& \frac{F^\prime \kappa^2}{p-\delta} \left(\frac{\partial^2 F}{\partial \theta^2} +F-\lambda(t) \right) - \frac{F}{(p-\delta)^2} (\lambda(t) -F)
\\
&=& F^\prime \kappa^2 \left[\frac{\partial^2 \varphi}{\partial \theta^2}
   +\frac{2}{(p-\delta)^2}\frac{\partial F}{\partial \theta}\frac{\partial p}{\partial \theta}
   +\frac{F}{(p-\delta)^2}\frac{\partial^2 p}{\partial \theta^2}
   -\frac{2F}{(p-\delta)^3}\left(\frac{\partial p}{\partial \theta}\right)^2\right]
   \\
   && +\frac{F^\prime \kappa^2}{p-\delta}(F-\lambda(t)) - \frac{F}{(p-\delta)^2} (\lambda(t) -F)
\\
&=& F^\prime \kappa^2 \frac{\partial^2 \varphi}{\partial \theta^2}
    + \frac{2F^\prime \kappa^2}{(p-\delta)^2}\frac{\partial p}{\partial \theta} \left[\frac{\partial \varphi}{\partial \theta} +\frac{F}{(p-\delta)^2}\frac{\partial p}{\partial \theta}\right]
    + F^\prime \kappa^2 \frac{F}{(p-\delta)^2}\frac{\partial^2 p}{\partial \theta^2}
    \\
    && -F^\prime \kappa^2\frac{2F}{(p-\delta)^3}\left(\frac{\partial p}{\partial \theta}\right)^2
      +\frac{F^\prime \kappa^2}{p-\delta}(F-\lambda(t)) - \frac{F}{(p-\delta)^2} (\lambda(t) -F)
\\
&=& F^\prime \kappa^2 \frac{\partial^2 \varphi}{\partial \theta^2}
    + \frac{2F^\prime \kappa^2}{(p-\delta)^2}\frac{\partial p}{\partial \theta}\frac{\partial \varphi}{\partial \theta}
    + F^\prime \kappa^2 \frac{F}{(p-\delta)^2}\left(\frac{\partial^2 p}{\partial \theta^2} +p -\delta\right)
    \\
    && -\frac{F^\prime \kappa^2}{p-\delta} \lambda(t) -\frac{\lambda(t)F}{(p-\delta)^2} + \left(\frac{F}{p-\delta}\right)^2
\\
&=& F^\prime \kappa^2 \frac{\partial^2 \varphi}{\partial \theta^2}
      +\frac{2F^\prime \kappa^2}{p-\delta}\frac{\partial p}{\partial \theta}\frac{\partial \varphi}{\partial \theta}
      +\frac{F^\prime \kappa \varphi}{p-\delta}-\frac{\delta F^\prime \kappa^2 \varphi}{p-\delta}
      -\frac{\lambda F^\prime \kappa^2}{p-\delta}-\frac{\lambda \varphi}{p-\delta}+\varphi^2.
\end{eqnarray*}
In the above calculation, the identity $\frac{\partial^2 p}{\partial \theta^2}+p=\frac{1}{\kappa}$ is used.

\begin{lemma}\label{lem:2.4.201811}
There is a positive constant $M$ independent of time $t$, $\theta$ and the finite number $\omega$ such that
\begin{eqnarray}\label{eq:2.11.201811}
\kappa(\theta, t) \leq M
\end{eqnarray}
for all $(\theta, t)\in [0, 2\pi]\times [0, \omega)$.
\end{lemma}
\begin{proof}
Consider the time interval $\big[0, \min\{T_1, \omega\}\big)$. We will prove (\ref{eq:2.11.201811}) by bounding the function $\varphi(\theta, t)$.
In the evolution equation of $\varphi$, the term $-\frac{\delta F^\prime \kappa^2 \varphi}{p-\delta}$ will be used to bound positive terms
$\frac{F^\prime \kappa \varphi}{p-\delta}$ and $\varphi^2$.
Rewrite the evolution equation of $\varphi$ as
\begin{eqnarray}\label{eq:2.12.201811}
\frac{\partial \varphi}{\partial t} &=& F^\prime \kappa^2 \frac{\partial^2 \varphi}{\partial \theta^2}
      +\frac{2F^\prime \kappa^2}{p-\delta}\frac{\partial p}{\partial \theta}\frac{\partial \varphi}{\partial \theta}
      -\frac{\lambda F^\prime \kappa^2}{p-\delta}-\frac{\lambda \varphi}{p-\delta} \nonumber\\
&&    +\frac{F^\prime \kappa}{p-\delta}\left(1-\frac{\delta}{2}\kappa\right)\varphi
      -\left(\frac{\delta F^\prime \kappa^2}{2F}-1\right) \varphi^2.
\end{eqnarray}
The condition (iii) of $F$ says $\lim\limits_{u\rightarrow +\infty} \frac{F^\prime (u)\cdot u^2}{F(u)}=+\infty$, so there exists a constant $U_0>0$ such that
$\frac{F^\prime (u)\cdot u^2}{F(u)}>\frac{2}{\delta}$ when $u > U_0$.

Suppose that $\varphi (\theta, t)$ attains its maximum at a point $(\theta_*, t)$ with respect to $\theta$.
If
\begin{eqnarray}\label{eq:2.13.201811}
\varphi(\theta_*, t)> \frac{1}{\delta} F\left(\max\left\{\frac{2}{\delta}, U_0\right\}\right)
\end{eqnarray}
then (\ref{eq:2.8.201811}) tells us that
\begin{eqnarray*}
F(\kappa(\theta_*, t))= (p-\delta)\varphi(\theta_*, t) > \frac{p-\delta}{\delta}F\left(\max\left\{\frac{2}{\delta}, U_0\right\}\right)
   >F\left(\max\left\{\frac{2}{\delta}, U_0\right\}\right).
\end{eqnarray*}
Noticing that $F(u)$ is increasing with respect to $u$, one obtains
$$\kappa(\theta_*, t)>\max\left\{\frac{2}{\delta}, U_0\right\}.$$
So we have $1-\frac{\delta}{2}\kappa(\theta_*, t) <0$. By the existence of $U_0$ and the above lower bound of the curvature $\kappa (\theta_*, t) >U_0$, one obtains
\begin{eqnarray*}
\frac{\delta F^\prime (\kappa(\theta_*, t))\cdot \kappa(\theta_*, t)^2}{2F(\kappa(\theta_*, t))}-1 >0.
\end{eqnarray*}
Therefore, (\ref{eq:2.13.201811}) implies
\begin{eqnarray*}
\frac{\partial \varphi}{\partial t}(\theta_*, t) \leq 0.
\end{eqnarray*}
By the above discussion, the function $\varphi_{\max}(t): = \max\{\varphi (\theta, t)|\theta \in [0, 2\pi]\}$
is decreasing if (\ref{eq:2.13.201811}) holds.
So the maximum principle implies that
\begin{eqnarray}\label{eq:2.14.201811}
\varphi(\theta, t) \leq \max
\left\{\max_\theta \varphi(\theta, 0), ~\frac{1}{\delta} F\left(\max\left\{\frac{2}{\delta}, U_0\right\}\right)\right\}:=C_0,
\ \ t\in [0, \min\{T_1, \omega\}),
\end{eqnarray}
where the number $T_1$ only depends on the initial curve and the function $F$.

If $\omega>T_1$ then by similar discussion, one can prove
\begin{eqnarray*}
\varphi(\theta, t) \leq \max
\left\{\max_\theta \varphi(\theta, T_1), ~\frac{1}{\delta} F\left(\max\left\{\frac{2}{\delta}, U_0\right\}\right)\right\}
\end{eqnarray*}
for $t\in [T_1, \min\{2T_1, \omega\})$. Since the estimate (\ref{eq:2.14.201811}) gives us an upper bound of $\max_\theta \varphi(\theta, T_1)$, one also has
$\varphi(\theta, t) \leq C_0$, where $t\in [T_1, \min\{2T_1, \omega\})$.

By the mathematical induction, there is a positive constant $C_0$ (the R.H.S. of (\ref{eq:2.14.201811})) such that
$\varphi(\theta, t) \leq C_0$ for all $t\in [0, \omega)$.  It follows from (\ref{eq:2.8.201811}) that
\begin{eqnarray}\label{eq:2.15.201811}
F(\kappa) \leq \left(\frac{L}{2}-\delta\right)C_0, \ \ \ \ \ \ t\in [0, \omega).
\end{eqnarray}
Therefore, the condition (ii) of the function $F$ implies that
the curvature $\kappa$ has an upper bound $M$ independent of time and the number $\omega$.
\end{proof}

Once the curvature is bounded uniformly from above, the flow can be extended on time interval $[0, +\infty)$.

\begin{theorem}\label{thm:2.5.201811}
The flow (\ref{eq:2.1.201811}) exists on time interval $[0, +\infty)$.
\end{theorem}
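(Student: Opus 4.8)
The plan is a standard continuation argument: assume the maximal existence time $\omega$ of the flow (\ref{eq:2.1.201811}) is finite, derive uniform (in time, up to $\omega$) bounds on the curvature and all its derivatives, pass to a smooth convex limit curve $X(\cdot,\omega)$, and restart the short-time existence from this curve to contradict the maximality of $\omega$.

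First I would collect the a priori estimates already at hand. Lemma \ref{lem:2.2.201811} gives the pointwise lower bound $\kappa(\theta,t)\ge f(t)$; since $f$ is decreasing and $\omega<\infty$, this yields a uniform positive lower bound $\kappa\ge m:=f(\omega)>0$ on $[0,2\pi]\times[0,\omega)$. Lemma \ref{lem:2.3.201811} keeps $L$ constant and makes $A$ increase, hence the isoperimetric ratio $I(t)$ is nonincreasing; Bonnesen's inequality then forces the uniform inradius bound (\ref{eq:2.4.201811}), and the comparison with the shrinking circle $Y(\cdot,t)$ solving (\ref{eq:2.5.201811}) produces, after re-centering on each slab $[nT_1,(n+1)T_1]$, the two-sided support-function bound (\ref{eq:2.7.201811}), namely $2\delta\le p\le L/2$. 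With these in place, Lemma \ref{lem:2.4.201811}---which is where conditions (ii)--(iii) on $F$ and the auxiliary quantity $\varphi=F/(p-\delta)$ are used---gives the uniform upper bound $\kappa\le M$. Thus $m\le\kappa\le M$ on $[0,\omega)$, so equation (\ref{eq:2.2.201811}) is uniformly parabolic with smooth, bounded coefficients, and the nonlocal term $\lambda(t)=\frac{1}{2\pi}\int_0^{2\pi}F(\kappa)\,d\theta$ satisfies $F(m)\le\lambda(t)\le F(M)$.

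Next I would upgrade these $C^0$ bounds to $C^\infty$ bounds. By the parabolic interior Schauder estimates applied to the uniformly parabolic equation (\ref{eq:2.2.201811}) (the bootstrap carried out in the next subsection), every spatial derivative $\partial_\theta^j\kappa$ is bounded uniformly on $[0,2\pi]\times[0,\omega)$. Consequently the velocity $\partial X/\partial t=(F-\lambda)N$ and all of its derivatives are uniformly bounded, so integrating the flow (\ref{eq:2.1.201811}) in $t$ shows $X(\cdot,t)$ is Cauchy in $C^\infty$ as $t\to\omega$; its limit $X(\cdot,\omega)$ is a closed curve whose curvature is $\kappa_\omega(\cdot)=\lim_{t\to\omega}\kappa(\cdot,t)$ by Arzel\`a--Ascoli. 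Since $\kappa_\omega\ge m>0$ and $\kappa_\omega$ inherits the closing condition (\ref{eq:2.3.201811}), $X(\cdot,\omega)$ is a smooth convex curve.

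Finally, using $X(\cdot,\omega)$ as initial data, Lemma \ref{lem:2.1.201811} produces a smooth convex solution on a short interval $[\omega,\omega+\varepsilon)$ which, by the uniqueness part of Lemma \ref{lem:2.1.201811}, glues with the original solution on $[0,\omega)$. This extends the flow past $\omega$, contradicting the assumption that $\omega$ is finite; hence $\omega=+\infty$. The delicate point in this scheme is the uniform curvature upper bound of Lemma \ref{lem:2.4.201811}: without the structural limits in (iii), the reaction terms $\frac{F^\prime\kappa}{p-\delta}\bigl(1-\frac{\delta}{2}\kappa\bigr)\varphi$ and $\bigl(\frac{\delta F^\prime\kappa^2}{2F}-1\bigr)\varphi^2$ in the evolution equation for $\varphi$ need not have the right signs at a spatial maximum, and the maximum principle argument would break down; everything else is a routine continuation and bootstrap.
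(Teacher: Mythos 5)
Your proposal is correct and follows essentially the same route as the paper: a continuation argument that combines the lower curvature bound of Lemma \ref{lem:2.2.201811}, the support-function bounds from Bonnesen's inequality and the shrinking-circle comparison, the uniform upper bound of Lemma \ref{lem:2.4.201811} via $\varphi=F/(p-\delta)$, the higher-derivative bounds of Section \ref{sec:2.4.201811}, and a restart from the smooth convex limit curve $X(\cdot,\omega)$. The only cosmetic difference is that you invoke Schauder estimates for the bootstrap where the paper uses integral estimates plus the Lin--Tsai maximum-principle technique, and the velocity of (\ref{eq:2.1.201811}) also carries the tangential term $-\frac{1}{\kappa}\frac{\partial F}{\partial s}T$, which is likewise controlled once the derivative bounds are in hand.
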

\begin{proof}
Suppose the flow (\ref{eq:2.1.201811}) exists on the maximal time interval $[0, \omega)$, where $\omega$ is finite.
For the fixed time $t\in (0, \omega)$, the assumption implies that $\kappa$ is uniformly bounded on the time interval $[0, t]$. So it follows from Lemma \ref{lem:2.2.201811}
that $\kappa$ is positive on $[0, 2\pi] \times [0, t]$. The number $t$ can be chosen arbitrarily close to $\omega$, hence the evolving curve is proved to be convex
on the time interval $[0, \omega)$. By Lemma \ref{lem:2.4.201811}, the curvature function has a uniformly upper bounded $M$ on the time interval $[0, \omega)$,
where the constant $M> 0$ only relies on the initial curve $X_0$.

Applying Lemma \ref{lem:2.2.201811} again, there is a constant $m=m(\omega, M)>0$ such that
\begin{eqnarray}\label{eq:2.16.201811}
\kappa (\theta, t) \geq m, \ \ \ \ \ \ (\theta, t) \in [0, 2\pi] \times [0, \omega).
\end{eqnarray}
Therefore, the evolution equation of $\kappa$ is uniformly parabolic on time interval $[0, \omega)$.
Since $\kappa$ has uniformly bounds (see (\ref{eq:2.11.201811}) and (\ref{eq:2.16.201811})) and
$F(\cdot)$ is smooth and increasing on $(0, +\infty)$, the functions $F(\kappa)$ and $\lambda (t)$ can be bounded on
time interval $[0, \omega)$ as follows
$$ F(m) \leq F(\kappa)\leq F(M), ~~ F(m)\leq \lambda (t)\leq F(M).$$
By Lemmas \ref{lem:2.6.201811} and \ref{lem:2.7.201811} in the next subsection, each derivative $|\frac{\partial^k F(\kappa)}{\partial \theta^k}|$
has a uniform bound on the bounded domain $[0, 2\pi] \times [0, \omega)$. Since $F(\kappa)$ also has uniformly bounds, there exists a convergent subsequence
$F(\kappa(\theta, t_i)$ tending to a limit
$$F(\kappa(\theta, \omega)):=\lim_{t_i \uparrow \omega} F(\kappa(\theta, t_i)).$$
According to the evolution equation of $F$, the time derivative $\frac{\partial F}{\partial t}$ also has uniform bounds. By the method in the paper \cite{Gao-Pan-Tsai-2020-2},
one has the full-time uniform convergence of $F(\kappa(\theta, t))$ as $t \rightarrow \omega$.
The solution to the flow (\ref{eq:1.1.201811}) is unique, so integrating the flow equation (\ref{eq:1.1.201811}) gives us a smooth convex curve $X(\cdot, \omega)$.

Let the convex curve $X(\cdot, \omega)$ evolve according to the flow (\ref{eq:1.1.201811}).
By the short time existence, one can extend this flow on a larger time interval $[0, \omega+\varepsilon)$ where $\varepsilon>0$. This contradicts to the assumption that $\omega$
is maximal. So the flow (\ref{eq:2.1.201811}) exists globally.
\end{proof}

\subsection{Higher regularity of the flow}\label{sec:2.4.201811}
For the higher regularity of the flow, one needs to bound all derivatives of $\kappa$ on any finite time interval $[0, \omega)$, where $\omega$ is a positive number.
The proof relies on both integral estimates (see for example Gage-Hamiton \cite{Gage-Hamilton}, Jiang-Pan \cite{Jiang-Pan-2008}) and a simpler method by Lin and Tsai \cite{Lin-Tsai-2008}.

On the time interval $[0, \omega)$, the estimates (\ref{eq:2.11.201811}), (\ref{eq:2.15.201811}) and (\ref{eq:2.16.201811}) imply that there are positive constants $c_1=c_1(\omega), C_i=C_i(\omega)$ such that
\begin{eqnarray}\label{eq:2.17.201811}
\left|\frac{d^i F}{du^i}(\kappa)\right|\leq C_i, i=1, 2, \cdots, \ \ \frac{d F}{du} (\kappa)\geq c_1.
\end{eqnarray}

\begin{lemma}\label{lem:2.6.201811}
On the time interval $[0, \omega)$, the integral $\int_0^{2\pi} \left(\frac{\partial F}{\partial \theta}\right)^6 d\theta$
has at most polynomial growth.
\end{lemma}
\begin{proof}
Using the evolution equation of $F$ (see \eqref{eq:2.9.201811}), one can compute that
\begin{eqnarray*}
\frac{d}{dt}\frac{1}{6}\int_0^{2\pi} \left(\frac{\partial F}{\partial \theta}\right)^6 d\theta
&=&\int_0^{2\pi} \left(\frac{\partial F}{\partial \theta}\right)^5 \frac{\partial^2 F}{\partial \theta \partial t} d\theta
   =-5\int_0^{2\pi} \left(\frac{\partial F}{\partial \theta}\right)^4
   \frac{\partial^2 F}{\partial \theta^2} \frac{\partial F}{\partial t} d\theta
\\
&=& -5\int_0^{2\pi}\left(\frac{\partial F}{\partial \theta}\right)^4
       \frac{\partial^2 F}{\partial \theta^2} F^\prime \kappa^2
     \left[\frac{\partial^2 F}{\partial \theta^2} +F-\lambda(t) \right] d\theta
\\
&=& -5\int_0^{2\pi}F^\prime \kappa^2 \left(\frac{\partial F}{\partial \theta}\right)^4
       \left(\frac{\partial^2 F}{\partial \theta^2}\right)^2 d\theta
   \\
   &&-5\int_0^{2\pi}F^\prime \kappa^2 (F-\lambda(t)) \left(\frac{\partial F}{\partial \theta}\right)^4
       \frac{\partial^2 F}{\partial \theta^2} d\theta.
\end{eqnarray*}
Substituting this inequality
\begin{eqnarray*}
(F-\lambda(t))\frac{\partial^2 F}{\partial \theta^2}
\leq \frac{1}{2}\left[\left(\frac{\partial^2 F}{\partial \theta^2}\right)^2 +(F-\lambda(t))^2 \right]
\end{eqnarray*}
into the evolution equation of
$\frac{1}{6}\int_0^{2\pi} \left(\frac{\partial F}{\partial \theta}\right)^6 d\theta$, one gets
\begin{eqnarray*}
\frac{d}{dt}\frac{1}{6}\int_0^{2\pi} \left(\frac{\partial F}{\partial \theta}\right)^6 d\theta
&\leq & -\frac{5}{2}\int_0^{2\pi}F^\prime \kappa^2 \left(\frac{\partial F}{\partial \theta}\right)^4
       \left(\frac{\partial^2 F}{\partial \theta^2}\right)^2 d\theta
   \\
   &&+\frac{5}{2}\int_0^{2\pi}F^\prime \kappa^2 (F-\lambda(t))^2 \left(\frac{\partial F}{\partial \theta}\right)^4d\theta.
\end{eqnarray*}
Since both $F(\kappa)$  and  $\lambda(t)$ are positive and $\lambda(t)$ is the average of $F(\kappa (\theta, t))$ with respect to $\theta$,
we have $|F-\lambda| \leq \max \{F, \lambda\} \leq F(M)$.
By (\ref{eq:2.11.201811}) and (\ref{eq:2.17.201811}), one obtains
\begin{eqnarray*}
\frac{d}{dt}\frac{1}{6}\int_0^{2\pi} \left(\frac{\partial F}{\partial \theta}\right)^6 d\theta
&\leq & \frac{5}{2}C_1 M^2 F(M)^2 \int_0^{2\pi} \left(\frac{\partial F}{\partial \theta}\right)^4 d\theta
\\
&\leq & \frac{5}{2}(2\pi)^{1/3}C_1 M^2 F(M)^2
        \left[\int_0^{2\pi} \left(\frac{\partial F}{\partial \theta}\right)^6 d\theta\right]^{2/3}.
\end{eqnarray*}
Integrating this inequality with respect to the time $t$ and using H\"{o}lder's inequality, one gets next estimate
\begin{eqnarray*}
\int_0^{2\pi} \left(\frac{\partial F}{\partial \theta}\right)^6 d\theta
\leq  \left[\left(\int_0^{2\pi} \left(\frac{\partial F_0}{\partial \theta}\right)^6 d\theta\right)^{\frac{1}{3}} +
45 (2\pi)^{1/3}C_1 M^2 F(M)^2 t\right]^{3}.
\end{eqnarray*}
So, $\int_0^{2\pi} \left(\frac{\partial F}{\partial \theta}\right)^6 d\theta$ is bounded by a cubic polynomial for $t\in [0, \omega)$.
\end{proof}

By H\"{o}lder's inequality, $\int_0^{2\pi} \left(\frac{\partial F}{\partial \theta}\right)^2 d\theta
\leq (2\pi)^{2/3}\left[\int_0^{2\pi} \left(\frac{\partial F}{\partial \theta}\right)^6 d\theta\right]^{1/3}$.
Lemma \ref{lem:2.6.201811} tells us that $\int_0^{2\pi} \left(\frac{\partial F}{\partial \theta}\right)^2 d\theta$
has at most linear growth.

\begin{lemma}\label{lem:2.7.201811}
On the time interval $[0, \omega)$, the integral $\int_0^{2\pi} \left(\frac{\partial^2 F}{\partial \theta^2}\right)^2 d\theta$
has at most polynomial growth.
\end{lemma}
\begin{proof}
We first compute by the evolution equation of $F(\kappa)$,
\begin{eqnarray*}
\frac{d}{dt}\frac{1}{2}\int_0^{2\pi} \left(\frac{\partial^2 F}{\partial \theta^2}\right)^2 d\theta
&=&\int_0^{2\pi}\frac{\partial^3 F}{\partial \theta^2 \partial t} \frac{\partial^2 F}{\partial \theta^2} d\theta
=-\int_0^{2\pi}\frac{\partial^2 F}{\partial \theta \partial t} \frac{\partial^3 F}{\partial \theta^3} d\theta
\\
&=& -\int_0^{2\pi}F^\prime \kappa^2 \left(\frac{\partial^3 F}{\partial \theta^3}\right)^2 d\theta
    -\int_0^{2\pi}F^{\prime\prime} \frac{\partial \kappa}{\partial \theta} \kappa^2
         \frac{\partial^2 F}{\partial \theta^2}\frac{\partial^3 F}{\partial \theta^3} d\theta
    \\
    &&-2\int_0^{2\pi}F^\prime \kappa \frac{\partial \kappa}{\partial \theta}
       \frac{\partial^2 F}{\partial \theta^2}\frac{\partial^3 F}{\partial \theta^3} d\theta
       -\int_0^{2\pi}F^{\prime\prime} \frac{\partial \kappa}{\partial \theta} \kappa^2 (F-\lambda(t))
       \frac{\partial^3 F}{\partial \theta^3} d\theta
     \\
    &&-2 \int_0^{2\pi}F^\prime \kappa \frac{\partial \kappa}{\partial \theta}(F-\lambda(t))
       \frac{\partial^3 F}{\partial \theta^3} d\theta
      -\int_0^{2\pi}F^\prime \kappa^2  \frac{\partial F}{\partial \theta}
        \frac{\partial^3 F}{\partial \theta^3} d\theta
\\
&:=& -\int_0^{2\pi}F^\prime \kappa^2 \left(\frac{\partial^3 F}{\partial \theta^3}\right)^2 d\theta
    +I+II+III+IV+V.
\end{eqnarray*}
Using the Cauchy-Schwarz inequality, (\ref{eq:2.11.201811}) and (\ref{eq:2.17.201811}), one can estimate
\begin{eqnarray*}
I &=& -\int_0^{2\pi}F^{\prime\prime} \frac{\partial \kappa}{\partial \theta} \kappa^2
        \frac{\partial^2 F}{\partial \theta^2}\frac{\partial^3 F}{\partial \theta^3} d\theta
 =  -\int_0^{2\pi}\frac{F^{\prime\prime}}{F^\prime}\kappa^2 \frac{\partial F}{\partial \theta}
        \frac{\partial^2 F}{\partial \theta^2}\frac{\partial^3 F}{\partial \theta^3} d\theta
\\
&\leq & \varepsilon \int_0^{2\pi} \left(\frac{\partial^3 F}{\partial \theta^3}\right)^2 d\theta
      +\frac{1}{4\varepsilon} \int_0^{2\pi} \left(\frac{F^{\prime\prime}}{F^\prime}\right)^2\kappa^4 \left(\frac{\partial F}{\partial \theta}\right)^2
       \left(\frac{\partial^2 F}{\partial \theta^2}\right)^2 d\theta,
\\
&\leq & \varepsilon \int_0^{2\pi} \left(\frac{\partial^3 F}{\partial \theta^3}\right)^2 d\theta
      +\frac{C_2^2 M^4}{4\varepsilon c_1^2} \int_0^{2\pi} \left(\frac{\partial F}{\partial \theta}\right)^2
       \left(\frac{\partial^2 F}{\partial \theta^2}\right)^2 d\theta,
\\
II &=& -2\int_0^{2\pi}F^\prime \kappa \frac{\partial \kappa}{\partial \theta}
       \frac{\partial^2 F}{\partial \theta^2}\frac{\partial^3 F}{\partial \theta^3} d\theta
\\
&\leq & \varepsilon \int_0^{2\pi} \left(\frac{\partial^3 F}{\partial \theta^3}\right)^2 d\theta
      +\frac{C_1^2M^2}{\varepsilon} \int_0^{2\pi} \left(\frac{\partial F}{\partial \theta}\right)^2
       \left(\frac{\partial^2 F}{\partial \theta^2}\right)^2 d\theta,
\\
III &=& -\int_0^{2\pi}F^{\prime\prime} \frac{\partial \kappa}{\partial \theta} \kappa^2 (F-\lambda(t))
       \frac{\partial^3 F}{\partial \theta^3} d\theta
\\
&=& -\int_0^{2\pi}\frac{F^{\prime\prime}}{F^{\prime}} \frac{\partial F}{\partial \theta} \kappa^2 (F-\lambda(t))
       \frac{\partial^3 F}{\partial \theta^3} d\theta
\\
&\leq & \varepsilon \int_0^{2\pi} \left(\frac{\partial^3 F}{\partial \theta^3}\right)^2 d\theta
       + \frac{C_2^2 M^4 (F(M))^2}{4\varepsilon c_1^2} \int_0^{2\pi} \left(\frac{\partial F}{\partial \theta}\right)^2 d\theta,
\\
IV &=& -2 \int_0^{2\pi}F^\prime \kappa \frac{\partial \kappa}{\partial \theta}(F-\lambda(t))
       \frac{\partial^3 F}{\partial \theta^3} d\theta
\\
&=& -2 \int_0^{2\pi} \kappa \frac{\partial F}{\partial \theta}(F-\lambda(t))
       \frac{\partial^3 F}{\partial \theta^3} d\theta
\\
&\leq & \varepsilon \int_0^{2\pi} \left(\frac{\partial^3 F}{\partial \theta^3}\right)^2 d\theta
        +\frac{M^2 (F(M))^2}{\varepsilon} \int_0^{2\pi} \left(\frac{\partial F}{\partial \theta}\right)^2 d\theta,
\\
V &=& -\int_0^{2\pi}F^\prime \kappa^2  \frac{\partial F}{\partial \theta}
        \frac{\partial^3 F}{\partial \theta^3} d\theta
\\
&\leq & \varepsilon \int_0^{2\pi} \left(\frac{\partial^3 F}{\partial \theta^3}\right)^2 d\theta
        +\frac{C_1^2 M^4}{4\varepsilon} \int_0^{2\pi} \left(\frac{\partial F}{\partial \theta}\right)^2 d\theta,
\end{eqnarray*}
where we have also used $|F-\lambda| \leq F(M)$. Therefore,
\begin{eqnarray*}
\frac{d}{dt}\frac{1}{2}\int_0^{2\pi} \left(\frac{\partial^2 F}{\partial \theta^2}\right)^2 d\theta
& \leq & \int_0^{2\pi}(5\varepsilon - F^\prime \kappa^2) \left(\frac{\partial^3 F}{\partial \theta^3}\right)^2 d\theta
\\
&& +\left(\frac{C_2^2 M^4}{4\varepsilon c_1^2}+ \frac{C_1^2M^2}{\varepsilon}\right)
      \int_0^{2\pi}\left(\frac{\partial F}{\partial \theta}\right)^2
       \left(\frac{\partial^2 F}{\partial \theta^2}\right)^2 d\theta
\\
&& + \left(\frac{C_2^2 M^4 (F(M))^2}{4\varepsilon c_1^2} + \frac{M^2 (F(M))^2}{\varepsilon}
           + \frac{C_1^2 M^4}{4\varepsilon}\right) \int_0^{2\pi} \left(\frac{\partial F}{\partial \theta}\right)^2 d\theta.
\end{eqnarray*}
Since
\begin{eqnarray*}
\int_0^{2\pi}\left(\frac{\partial F}{\partial \theta}\right)^2
            \left(\frac{\partial^2 F}{\partial \theta^2}\right)^2 d\theta
&= & -\frac{1}{3} \int_0^{2\pi}\left(\frac{\partial F}{\partial \theta}\right)^3
            \frac{\partial^3 F}{\partial \theta^3} d\theta
\\
&\leq & \frac{\varepsilon^2}{3} \int_0^{2\pi}\left(\frac{\partial^3 F}{\partial \theta^3}\right)^2 d\theta
      +\frac{1}{12\varepsilon^2} \int_0^{2\pi} \left(\frac{\partial F}{\partial \theta}\right)^6 d\theta,
\end{eqnarray*}
one can finally obtain that
\begin{eqnarray*}
\frac{d}{dt}\frac{1}{2}\int_0^{2\pi} \left(\frac{\partial^2 F}{\partial \theta^2}\right)^2 d\theta
& \leq & \int_0^{2\pi} \left[5\varepsilon - F^\prime \kappa^2+\frac{\varepsilon}{3}\left(\frac{C_2^2 M^4}{4 c_1^2}+ C_1^2M^2\right)\right]
        \left(\frac{\partial^3 F}{\partial \theta^3}\right)^2 d\theta
\\
&& +\frac{1}{12\varepsilon^2}\left(\frac{C_2^2 M^4}{4\varepsilon c_1^2}+ \frac{C_1^2M^2}{\varepsilon}\right)
      \int_0^{2\pi}\left(\frac{\partial F}{\partial \theta}\right)^6 d\theta
\\
&& + \left(\frac{C_2^2 M^4 (F(M))^2}{4\varepsilon c_1^2} + \frac{M^2 (F(M))^2}{\varepsilon}
           + \frac{C_1^2 M^4}{4\varepsilon}\right) \int_0^{2\pi} \left(\frac{\partial F}{\partial \theta}\right)^2 d\theta.
\end{eqnarray*}

By (\ref{eq:2.16.201811}) and (\ref{eq:2.17.201811}), one can choose positive number $\varepsilon=\varepsilon(c_1, m, C_1, C_2, M)$ small enough such that
\begin{eqnarray*}
&&5\varepsilon - F^\prime \kappa^2+\frac{\varepsilon}{3}\left(\frac{C_2^2 M^4}{4 c_1^2}+ C_1^2M^2\right)
\\
&& \leq 5\varepsilon - c_1 m^2+\frac{\varepsilon}{3}\left(\frac{C_2^2 M^4}{4 c_1^2}+ C_1^2M^2\right) <0.
\end{eqnarray*}
By Lemma \ref{lem:2.6.201811}, both $\int_0^{2\pi} \left(\frac{\partial F}{\partial \theta}\right)^6 d\theta$
and $\int_0^{2\pi} \left(\frac{\partial F}{\partial \theta}\right)^2 d\theta$ have at most
polynomial growth if $t\in [0, \omega)$. Integrating the above differential inequality can show that
$\int_0^{2\pi} \left(\frac{\partial^2 F}{\partial \theta^2}\right)^2 d\theta$ is bounded by a polynomial with degree at most 4.
\end{proof}

By Sobolev's inequality, the function $\frac{\partial F(\kappa (\theta, t))}{\partial \theta}$ is bounded on the domain $[0, 2\pi] \times [0, \omega)$.
There is a constant $\widetilde{M}_1 = \widetilde{M}_1(X_0, \omega)$ such that
\begin{eqnarray}\label{eq:2.18.201811}
\left|\frac{\partial F(\kappa (\theta, t))}{\partial \theta} \right| \leq \widetilde{M}_1, \ \ \ \ \ \ (\theta, t) \in [0, 2\pi] \times [0, \omega).
\end{eqnarray}

\begin{lemma}\label{lem:2.8.201811}
There exists a positive constant $\widetilde{M}_i$, relying on the initial curve $X_0$ and the number $\omega$, such that
\begin{eqnarray}\label{eq:2.19.201811}
\left| \frac{\partial^i }{\partial \theta^i}F(\kappa(\theta, t))\right| \leq \widetilde{M}_i, ~~(\theta, t)\in [0, 2\pi] \times [0, \omega), ~~i=2, 3, \cdots.
\end{eqnarray}
\end{lemma}
\begin{proof}
Let $\alpha$ be a constant to be determined and set
$g = \frac{\partial^2 F}{\partial \theta^2} +\alpha \left(\frac{\partial F}{\partial \theta} \right)^2$.
One may compute from \eqref{eq:2.9.201811} and the fact $\frac{\partial^2 \kappa}{\partial \theta^2}
= \frac{1}{F^{\prime}} \left(\frac{\partial^2 F}{\partial \theta^2} - F^{\prime\prime}(\kappa) \left( \frac{\partial \kappa}{\partial \theta}\right)^2 \right)$ to obtain
\begin{eqnarray}
\frac{\partial g}{\partial t} &=& F^\prime \kappa^2 \frac{\partial^2 g}{\partial \theta^2}
    +\left(4\kappa \frac{\partial F}{\partial \theta}+ 2F^{\prime\prime} \kappa^2 \frac{\partial \kappa}{\partial \theta}
    -2\alpha F^{\prime} \kappa^2 \frac{\partial F}{\partial \theta} \right) \frac{\partial g}{\partial \theta}
    \nonumber\\
    &&+\left(-2\alpha F^\prime \kappa^2 + \frac{F^{\prime\prime}}{F^\prime}\kappa^2 +2\kappa\right)g^2
    + \text{linear terms of} ~g. \label{eq:2.20.201811}
\end{eqnarray}
It follows from (\ref{eq:2.11.201811}) and (\ref{eq:2.16.201811}) that both $\kappa$ and $F^{\prime}(\kappa)$ have positive lower bounds,
$$\kappa \geq m(\omega, M)>0, ~~F^{\prime}(\kappa) \geq F^{\prime}(m)>0, ~~(\theta, t) \in [0, 2\pi] \times [0, \omega).$$
Since the linear terms of $g$ in the equation \eqref{eq:2.20.201811} have bounded coefficients,
one can choose $\alpha>0$ large enough so that the coefficient of $g^2$ in the above equation has negative upper bound
$$-2\alpha F^\prime \kappa^2 + \frac{F^{\prime\prime}}{F^\prime}\kappa^2 +2\kappa \leq c(\omega)<0.$$
By the maximum principle, the function $g_{\max} (t)$ is uniformly bounded on the time interval $[0, \omega)$.
Therefore, $g$ has an uniformly upper bound on time interval $[0, \omega)$.
Similarly, one can choose $\alpha <0$ with sufficiently large $|\alpha|$
so that the coefficient of $g^2$ is uniform positive and $g$ has a lower bound on time interval $[0, \omega)$. This shows that
$|\frac{\partial^2 F}{\partial \theta^2}|$ is uniformly bounded on $[0, 2\pi] \times [0, \omega)$.

Since every higher derivative $\frac{\partial^k F}{\partial \theta^k}(k=3, 4, \cdots)$
satisfies a linear parabolic equation with uniformly bounded coefficients, its absolute value  $\left| \frac{\partial^k F}{\partial \theta^k}\right|$
is uniformly bounded on $[0, 2\pi] \times [0, \omega)$.
\end{proof}

By the uniform positive lower and upper bounds of $\kappa$ and the relations between
$\frac{\partial^k F}{\partial \theta^k}$ and $\frac{\partial^k \kappa}{\partial \theta^k}$, each derivative of $\kappa$ can be
uniformly bounded on the domain $[0, 2\pi] \times [0, \omega)$. This means that the evolving curve $X(\cdot, t)$ is smooth for every
$t\in [0, \omega)$.

Until now we only prove that $\kappa$ is always positive. Whether the evolution equation of $\kappa$ is
degenerate or not as $t\rightarrow +\infty$ is still unknown. In fact, the existence of a uniformly lower
positive bound of the curvature plays an essential role in the study of asymptotic behavior of the evolving curve.
We leave the proof to the next section.

\section{Convergence of the curvature}\label{sec:3.201811}
The main task of this section is to prove that the curvature $\kappa(\theta, t)$ converges to a constant $\frac{2\pi}{L}$
as time tends to infinity.

\begin{remark}\label{rmk:3.1.201811} In the condition (i), we assume that $F^\prime (u)u$ tends to 0 as $u \rightarrow 0$.
In fact, once the limit $\lim\limits_{u\rightarrow 0^+} F^\prime (u)\cdot u$ exists, other conditions of the function $F$ imply that this limit is 0, i.e.,
\begin{eqnarray}\label{eq:3.1.201811}
\lim_{u\rightarrow 0^+} F^\prime (u)\cdot u=0.
\end{eqnarray}
\end{remark}
One may explain (\ref{eq:3.1.201811}) as follows. Since $F^\prime (u)u >0$ holds for $u>0$, one obtains the lower limit
\begin{eqnarray*}
\liminf_{u\rightarrow 0^+} F^\prime (u)\cdot u \geq 0.
\end{eqnarray*}
If one can prove this lower limit equals to $0$, then by the existence of $\lim\limits_{u\rightarrow 0^+} F^\prime (u)\cdot u$, we complete the proof.
Suppose that the lower limit is positive.
There exist constants $\delta>0$ and $C_0>0$ such that $F^\prime (u)\cdot u >C_0$ for $u\in (0, \delta)$.
So one obtains that $F^\prime (u)> C_0/u$ on the interval $(0, \delta)$. Integrating this inequality yields
\begin{eqnarray*}
F(\delta) > F(\varepsilon) + C_0(\ln \delta -\ln \varepsilon) > C_0(\ln \delta -\ln \varepsilon),
\end{eqnarray*}
where $\varepsilon\in(0, \delta)$ can be chosen as an arbitrary number. Letting $\varepsilon$ tend to 0, we
find that $F(\delta)$ is unbounded, which is a contradiction.

Now let us go back to the flow. Using the evolution equation of $F$ (see (\ref{eq:2.8.201811})), one can compute that
\begin{eqnarray*}
&&\frac{d}{dt}\frac{1}{2}\int_0^{2\pi} \left(\frac{\partial F}{\partial \theta}\right)^2 d\theta
=\int_0^{2\pi} \frac{\partial F}{\partial \theta} \frac{\partial^2 F}{\partial \theta \partial t} d\theta
   =-\int_0^{2\pi} \frac{\partial^2 F}{\partial \theta^2} \frac{\partial F}{\partial t} d\theta
\\
&=& -\int_0^{2\pi} \frac{\partial F}{\partial t}
    \left(\frac{1}{F^\prime (\kappa)\kappa^2}\frac{\partial F}{\partial t}-F+\lambda\right)d\theta
\\
&=& -\int_0^{2\pi} \frac{1}{F^\prime (\kappa)\kappa^2} \left(\frac{\partial F}{\partial t}\right)^2 d\theta
    +\frac{1}{2}\frac{d}{dt} \int_0^{2\pi} F^2 d\theta -\lambda \frac{d}{dt} \int_0^{2\pi}F d\theta.
\end{eqnarray*}
Since $\lambda = \frac{1}{2\pi}\int_0^{2\pi}F d\theta$, one gets that
\begin{eqnarray*}
\frac{d}{dt}\frac{1}{2}\int_0^{2\pi} \left(\frac{\partial F}{\partial \theta}\right)^2 d\theta
&\leq& \frac{1}{2}\frac{d}{dt} \int_0^{2\pi} F^2 d\theta -\frac{1}{4\pi} \frac{d}{dt}
\left(\int_0^{2\pi}F d\theta\right)^2,
\end{eqnarray*}
which gives us by integration
\begin{eqnarray}
\frac{1}{2}\int_0^{2\pi} \left(\frac{\partial F}{\partial \theta}\right)^2 d\theta
&\leq& \frac{1}{2}\int_0^{2\pi} \left(\frac{\partial F_0}{\partial \theta}\right)^2 d\theta
      +\frac{1}{2}\int_0^{2\pi} F^2 d\theta -\frac{1}{2}\int_0^{2\pi} (F_0)^2 d\theta
\nonumber\\
&&    -\frac{1}{4\pi}\left(\int_0^{2\pi}F d\theta\right)^2+\frac{1}{4\pi}\left(\int_0^{2\pi}F_0 d\theta\right)^2
\nonumber\\
&\leq& \frac{1}{2}\int_0^{2\pi} \left(\frac{\partial F_0}{\partial \theta}\right)^2 d\theta
      +\pi F(M)^2+\frac{1}{4\pi}\left(\int_0^{2\pi}F_0 d\theta\right)^2.\label{eq:3.2.201811}
\end{eqnarray}
Therefore, the function $F(\kappa(\theta, t))$ is equicontinuous with respect to $\theta$.
By Lemma \ref{lem:2.2.201811} and Lemma \ref{lem:2.4.201811}, the curvature has uniform bounds
\begin{eqnarray*}
0 < \kappa(\theta, t) \leq M, ~~(\theta, t)\in [0, 2\pi]\times [0, +\infty).
\end{eqnarray*}
So does the function $F(\kappa(\theta, t))$.
Therefore, the Arzel\`{a}-Ascoli Theorem tells us that there is a convergent subsequence $\{F(\kappa(\theta, t_i))\}$, where $t_i$ tends to infinity.

\begin{lemma}\label{lem:3.2.201811}
There is a positive constant $c_0$ independent of time such that
\begin{eqnarray}\label{eq:3.3.201811}
\lambda (t)\geq c_0>0, \ \ \ \ \ \ t\geq 0.
\end{eqnarray}
\end{lemma}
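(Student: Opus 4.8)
The plan is to bound $\lambda(t)=\frac{1}{2\pi}\int_0^{2\pi}F(\kappa(\theta,t))\,d\theta$ from below by exploiting the fact that the length is preserved. The key point is that for a strictly convex curve parametrized by its tangent angle one has the arc‑length–angle relation $\int_0^{2\pi}\frac{1}{\kappa(\theta,t)}\,d\theta=L(t)$, and $L(t)\equiv L$ by Lemma~\ref{lem:2.3.201811}. Consequently the curvature cannot be very small on too large a portion of $[0,2\pi]$; on the complementary (large) set $F(\kappa)$ is bounded below by a fixed positive number, and this forces $\lambda(t)$ to stay above a positive constant depending only on $L$ and $F$. Note that the flow is defined for all $t\ge0$ by Theorem~\ref{thm:2.5.201811}, so the statement makes sense for every $t\ge0$, and no upper bound on $\kappa$ is needed for this lemma.

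In detail I would proceed as follows. By Lemma~\ref{lem:2.2.201811} we have $\kappa(\theta,t)>0$ everywhere, so $\int_0^{2\pi}\kappa^{-1}\,d\theta=L$ is meaningful, and $L$ is a fixed constant. Fix $t\ge0$ and set $B_t:=\{\theta\in[0,2\pi]:\kappa(\theta,t)<\pi/L\}$. On $B_t$ we have $\kappa^{-1}>L/\pi$, whence
\begin{eqnarray*}
L=\int_0^{2\pi}\frac{d\theta}{\kappa(\theta,t)}\ \geq\ \int_{B_t}\frac{d\theta}{\kappa(\theta,t)}\ \geq\ \frac{L}{\pi}\,|B_t|,
\end{eqnarray*}
so that $|B_t|\leq\pi$ and therefore $\bigl|[0,2\pi]\setminus B_t\bigr|\geq\pi$. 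On $[0,2\pi]\setminus B_t$ we have $\kappa\geq\pi/L$, and since $F$ is increasing and positive on $(0,+\infty)$, this gives $F(\kappa)\geq F(\pi/L)>0$ there. Since $F(\kappa)\geq0$ on $B_t$ as well, we conclude
\begin{eqnarray*}
\lambda(t)=\frac{1}{2\pi}\int_0^{2\pi}F(\kappa(\theta,t))\,d\theta\ \geq\ \frac{1}{2\pi}\int_{[0,2\pi]\setminus B_t}F(\kappa(\theta,t))\,d\theta\ \geq\ \frac{1}{2\pi}\cdot\pi\cdot F\left(\frac{\pi}{L}\right)=\frac{1}{2}\,F\left(\frac{\pi}{L}\right),
\end{eqnarray*}
so the lemma holds with $c_0:=\frac12F(\pi/L)$, a constant depending only on the fixed length $L$ and on $F$, hence independent of $t$.

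There is no serious obstacle here; the only thing to get right is translating "the length is preserved" into a measure bound on the set where $\kappa$ is small, which is precisely the step where Lemma~\ref{lem:2.3.201811} enters (and where the positivity assumption (ii) on $F$ is used to get a strictly positive value $F(\pi/L)$). One could alternatively try to combine the oscillation bound coming from (\ref{eq:3.2.201811}) — which controls $\max_\theta F(\kappa)-\min_\theta F(\kappa)$ by a time‑independent constant — with the elementary consequence of $\int_0^{2\pi}\kappa^{-1}d\theta=L$ that $\max_\theta\kappa(\cdot,t)\geq 2\pi/L$; but that route only yields a positive lower bound when the oscillation constant happens to be smaller than $F(2\pi/L)$, which need not hold for wiggly initial data, so the measure argument above is both cleaner and unconditional.
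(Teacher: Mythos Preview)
Your argument is correct and is genuinely different from the paper's. The paper splits into the cases $F(0^+)>0$ (trivial) and $F(0^+)=0$, and in the second case argues by contradiction: it uses the uniform $L^2$ bound on $\partial F/\partial\theta$ (equation~(\ref{eq:3.2.201811})) together with the uniform upper bound on $\kappa$ from Lemma~\ref{lem:2.4.201811} to apply Arzel\`a--Ascoli, extract a limit $F_\infty\equiv 0$, deduce $\kappa(\cdot,t)\to 0$ pointwise, and then contradict $\int_0^{2\pi}\kappa^{-1}d\theta=L$. Your proof bypasses all of this compactness machinery with a one--line Chebyshev--type measure estimate on the set $\{\kappa<\pi/L\}$, using only the length identity $\int_0^{2\pi}\kappa^{-1}d\theta=L$, the monotonicity of $F$, and the positivity from condition~(ii). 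In particular you need neither the curvature upper bound nor the gradient estimate~(\ref{eq:3.2.201811}), and you obtain the explicit constant $c_0=\tfrac12 F(\pi/L)$ rather than a nonconstructive one. Your closing remark about the oscillation route is also accurate: the bound from~(\ref{eq:3.2.201811}) controls $\max_\theta F(\kappa)-\min_\theta F(\kappa)$ only up to a constant that may exceed $F(2\pi/L)$, so that approach would indeed be conditional.
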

\begin{proof}
%
By the positivity of $\kappa$ and $F(u)$ for all $u>0$,
one can conclude that $\lambda (t)>0$ if $t>0$. Suppose there is a sequence $t_i$ tending to $+\infty$ such that
$\lambda (t_i)\rightarrow 0$. Choose a subsequence $\{\widetilde{t}_i\}$ of $\{t_i\}$ so that $\{F(\kappa(\theta, \widetilde{t}_i))\}$
converges uniformly to a continuous function as $\widetilde{t}_i \rightarrow +\infty$.
If one sets $$F_\infty (\theta): = \lim_{\widetilde{t}_i \rightarrow +\infty} F(\kappa(\theta, \widetilde{t}_i)),$$ then
\begin{eqnarray*}
0= \lim_{\widetilde{t}_i \rightarrow +\infty} \lambda (\widetilde{t}_i)
 = \lim_{\widetilde{t}_i \rightarrow +\infty} \frac{1}{2\pi} \int_0^{2\pi}F(\kappa(\theta, \widetilde{t}_i)) d\theta
 = \frac{1}{2\pi} \int_0^{2\pi}F_\infty (\theta) d\theta.
\end{eqnarray*}
Using the positivity of $F(\kappa)$ again, one has $F_\infty(\theta)\equiv 0$.
Now, fix $\theta$. Since $\kappa (\theta, t)$ is bounded with respect to $t$, $\{\kappa (\theta, t)|t\geq 0\}$ has a convergent
subsequence. Denote by $\kappa_\infty (\theta)$ the limit of a convergent subsequence $\{\kappa(\theta, t_i)\}$
where $t_i\rightarrow +\infty$. Since $F$ is continuous,
$$F(\kappa_\infty (\theta)) =\lim_{t_i\rightarrow +\infty}F(\kappa(\theta, t_i))=F_\infty(\theta)= 0.$$
Noticing $F(0)=0$, $F(u)>0$ (for all $u>0$) and $F$ is increasing, one can conclude that $\kappa_\infty (\theta)= 0$.
Its every convergent subsequence tends to the same limit 0, so $\kappa (\theta, t)$ converges to 0 as $t\rightarrow +\infty$, for every fixed $\theta$.
By the arbitrariness of $\theta$,
$$\lim_{t\rightarrow +\infty} \kappa (\theta, t)\equiv 0$$
in the sense of $\theta$-pointwise. Therefore, the length of the curve
\begin{eqnarray*}
L(t)=\int_0^{2\pi} \frac{1}{\kappa(\theta, t)} d\theta
\end{eqnarray*}
is unbounded as $t \rightarrow +\infty$. This is impossible,
because the length of the evolving curve is fixed under the flow. So the function $\lambda(t)$ also has a positive lower bound on
$[0, +\infty)$.
\end{proof}

\begin{lemma}\label{lem:3.3.201811}
There is a positive constant $m$ independent of time such that
\begin{eqnarray}\label{eq:3.4.201811}
\kappa (\theta, t)\geq m
\end{eqnarray}
holds for all $(\theta, t) \in [0, 2\pi] \times [0, +\infty)$.
\end{lemma}
\begin{proof}
Consider the time interval $[nT_1, (n+1)T_1]$ ($n=0, 1, 2, \cdots$), where $T_1$ is a constant independent of time and defined
in Section 2. Choose an original point $O_n$ so that the support function satisfies
(\ref{eq:2.8.201811}) on the domain $Q_n := [0, 2\pi] \times [nT_1, (n+1)T_1]$. Let $\Delta$ be a positive constant such that $\Delta\geq 2p$
on $Q_n$. For example,
let $\Delta =L$. Set $\psi := \frac{F}{\Delta - p}$. Since
\begin{eqnarray*}
\frac{\partial \psi}{\partial \theta} = \frac{1}{\Delta - p}\frac{\partial F}{\partial \theta}
   +\frac{F}{(\Delta - p)^2}\frac{\partial p}{\partial \theta},
\end{eqnarray*}
and
\begin{eqnarray*}
\frac{\partial^2 \psi}{\partial \theta^2} = \frac{1}{\Delta - p}\frac{\partial^2 F}{\partial \theta^2}
   +\frac{2}{(\Delta - p)^2}\frac{\partial F}{\partial \theta}\frac{\partial p}{\partial \theta}
   +\frac{F}{(\Delta - p)^2}\frac{\partial^2 p}{\partial \theta^2}
   +\frac{2F}{(\Delta - p)^3}\left(\frac{\partial p}{\partial \theta}\right)^2,
\end{eqnarray*}
one has the evolution equation of $\psi$:
\begin{eqnarray*}
\frac{\partial \psi}{\partial t}
&=& \frac{1}{\Delta - p} \frac{\partial F}{\partial t} + \frac{F}{(\Delta - p)^2} \frac{\partial p}{\partial t}
\\
&=& \frac{F^\prime \kappa^2}{\Delta - p} \left(\frac{\partial^2 F}{\partial \theta^2} +F-\lambda(t) \right) + \frac{F}{(\Delta - p)^2} (\lambda(t) -F)
\\
&=& F^\prime \kappa^2 \left[\frac{\partial^2 \psi}{\partial \theta^2}
   -\frac{2}{(\Delta - p)^2}\frac{\partial F}{\partial \theta}\frac{\partial p}{\partial \theta}
   -\frac{F}{(\Delta - p)^2}\frac{\partial^2 p}{\partial \theta^2}
   -\frac{2F}{(\Delta - p)^3}\left(\frac{\partial p}{\partial \theta}\right)^2\right]
   \\
   && +\frac{F^\prime \kappa^2}{\Delta - p}(F-\lambda(t)) + \frac{F}{(\Delta - p)^2} (\lambda(t) -F)
\\
&=& F^\prime \kappa^2 \frac{\partial^2 \psi}{\partial \theta^2}
      -\frac{2F^\prime \kappa^2}{\Delta - p}\frac{\partial p}{\partial \theta}\frac{\partial \psi}{\partial \theta}
      -\frac{F^\prime \kappa^2 \psi}{\Delta - p}\frac{\partial^2 p}{\partial \theta^2}
      \\
&&      +\frac{\Delta F^\prime \kappa^2 \psi}{\Delta - p} - \frac{p F^\prime \kappa^2 \psi}{\Delta - p}
      -\frac{\lambda F^\prime \kappa^2}{\Delta - p}+\frac{\lambda F}{(\Delta - p)^2}-\psi^2.
\end{eqnarray*}
Using the formula $\frac{\partial^2 p}{\partial \theta^2}+p=\frac{1}{\kappa}$, one can continue to compute on the domain $Q_n$,
\begin{eqnarray*}
\frac{\partial \psi}{\partial t}
&=& F^\prime \kappa^2 \frac{\partial^2 \psi}{\partial \theta^2}
      -\frac{2F^\prime \kappa^2}{\Delta - p}\frac{\partial p}{\partial \theta}\frac{\partial \psi}{\partial \theta}
     +\frac{\Delta F^\prime \kappa^2 \psi}{\Delta - p} - \frac{F^\prime \kappa \psi}{\Delta - p}
      -\frac{\lambda F^\prime \kappa^2}{\Delta - p}+\frac{\lambda F}{(\Delta - p)^2}-\psi^2
\\
&=&   F^\prime \kappa^2 \frac{\partial^2 \psi}{\partial \theta^2}
      -\frac{2F^\prime \kappa^2}{\Delta - p}\frac{\partial p}{\partial \theta}\frac{\partial \psi}{\partial \theta}
      +\frac{\Delta F^\prime \kappa^2 \psi}{\Delta - p}
\\
&& +\frac{\lambda F}{\Delta - p}\left(\frac{1}{2(\Delta - p)}-\frac{F^\prime \kappa^2}{F}\right)
   +\left(\frac{\lambda}{2(\Delta - p)}-\frac{F^\prime \kappa}{\Delta - p}-\psi\right)\psi.
\end{eqnarray*}

Since $\lim\limits_{u\rightarrow 0^+} \frac{F^\prime (u)\cdot u^2}{F(u)}=0$ and $\frac{1}{L} < \frac{1}{\Delta - p}$, there exists $u_1>0$ such that if $u\in (0, u_1)$
then
\begin{eqnarray*}
\frac{F^\prime (u)\cdot u^2}{F(u)} <\frac{1}{2L}< \frac{1}{2(\Delta - p)}.
\end{eqnarray*}
Since $\lim\limits_{u\rightarrow 0^+} F^\prime (u)\cdot u=0$, there exists $u_2>0$ such that if $u\in (0, u_2)$ then by (\ref{eq:2.8.201811}) and (\ref{eq:3.3.201811}),
\begin{eqnarray*}
F^\prime (u)\cdot u < \frac{\delta c_0}{4L} < \frac{\delta\lambda}{4(\Delta - p)}.
\end{eqnarray*}

If $\psi(\theta, t) < \frac{F(u_1)}{L}$, i.e. $F(\kappa) < \frac{\Delta -p}{L}F(u_1) < F(u_1)$, then the monotonicity of $F$ implies $\kappa < u_1$. And
by the existence of $u_1$, one gets
\begin{eqnarray*}
\frac{1}{2(\Delta - p)} - \frac{F^\prime (\kappa)\cdot \kappa^2}{F(\kappa)} > 0.
\end{eqnarray*}
If $\psi < \min\{\frac{c_0}{4L}, \frac{F(u_2)}{L}\}$, then
$$\psi < \frac{c_0}{4L} <\frac{\lambda}{4(\Delta - p)}$$
and
$$F(\kappa) < \frac{\Delta -p}{L}F(u_2) < F(u_2).$$
Since $F$ is increasing, one has $\kappa < u_2$. And by (\ref{eq:2.7.201811}) and the existence of $u_2$, one may estimate
$$\frac{F^\prime (\kappa)\cdot \kappa}{\Delta -p} < \frac{F^\prime (\kappa)\cdot \kappa}{\delta}< \frac{\lambda}{4(\Delta - p)}.$$
Now, if $\psi < \min\{\frac{c_0}{4L}, \frac{F(u_2)}{L}\}$ then
\begin{eqnarray*}
\frac{\lambda}{2(\Delta - p)}-\frac{F^\prime \kappa}{\Delta - p}-\varphi>0.
\end{eqnarray*}
Therefore, the function $\psi_{\min}(t): = \min\{\psi (\theta, t)|\theta \in [0, 2\pi]\}$
is increasing once
\begin{eqnarray*}
\psi(\theta, t) < \min\left\{\frac{c_0}{4L}, ~\frac{F(u_1)}{L}, ~\frac{F(u_2)}{L}\right\},
\end{eqnarray*}
where the constants $c_0, u_1$ and $u_2$ are independent of time.
Applying the maximum principle to the evolution equation of $\psi$, one gets
\begin{eqnarray*}
\psi(\theta, t) \geq \psi_{\min} (t) \geq
\min\left\{\min_{\theta\in [0, 2\pi]} \psi(\theta, 0), ~\frac{c_0}{4L}, ~\frac{F(u_1)}{L}, ~\frac{F(u_2)}{L}\right\}:=\psi_0>0
\end{eqnarray*}
on the domain $Q_n$. It follows from the Mathematical induction that $\psi(\theta, t)$ has lower positive bound $\psi_0$ for
$(\theta, t) \in [0, 2\pi] \times [0, +\infty)$.

Thus, under the flow (\ref{eq:2.1.201811}), the function $F$ has a uniformly lower positive bound
$$F(\kappa) = (\Delta - p)\psi \geq p\psi_{\min}(t) \geq 2\delta\psi_0>0,~~(\theta, t) \in Q_n, n=0, 1, 2, \cdots,$$
where $\Delta =L$, $\delta$ and $\psi_0$ are positive constants independent of time.
Since $F(0) =0$ and $F(u)$ is strictly increasing, there is a positive constant $m$ which is independent of time such that $\kappa \geq m$ for
all $(\theta, t)\in [0, 2\pi]\times [0, +\infty)$. The proof is completed.
\end{proof}

Lemma \ref{lem:3.3.201811} tells us that the evolution equation of $\kappa$ does not degenerate as $t\rightarrow +\infty$. Then uniform bounds of
$\kappa ~(0< m \leq \kappa \leq M)$ imply that there exists a constant $C_n$ independent of time and $\theta$ such that
\begin{eqnarray}\label{eq:3.5.201811}
\left|\frac{d^nF}{du^n} (\kappa)\right|\leq C_n, ~~~~n=1, 2, \cdots,
\end{eqnarray}
under the flow (\ref{eq:2.1.201811}). There is also a positive constant $c_1$ independent of time such that
\begin{eqnarray}\label{eq:3.6.201811}
F^\prime (\kappa(\theta, t)) \geq c_1
\end{eqnarray}
for all $(\theta, t)\in [0, 2\pi] \times [0, +\infty)$.

Using (\ref{eq:3.2.201811}) and (\ref{eq:3.6.201811}), one has, on the domain $[0, 2\pi] \times [0, +\infty)$,
\begin{eqnarray}\label{eq:3.7.201811}
\int_0^{2\pi} \left(\frac{\partial \kappa}{\partial \theta}\right)^2 d\theta &=& \int_0^{2\pi} \left(\frac{1}{F^{\prime}}\frac{\partial F}{\partial \theta}\right)^2 d\theta
\nonumber\\
&\leq & \frac{1}{c_1^2} \left[\int_0^{2\pi} \left(\frac{\partial F_0}{\partial \theta}\right)^2 d\theta + 4\pi F(M)^2  \right].
\end{eqnarray}
So $\kappa(\theta, t)$ is equicontinuous with respect to $\theta$.

The curvature has uniformly upper and lower bounds and it is equicontinuous,
so there is a convergent subsequence $\kappa (\theta, t_i)$ as $t_i\rightarrow +\infty$. Furthermore,
one can show that the curvature converges to a constant as $t\rightarrow +\infty$.

\begin{lemma}\label{lem:3.4.201811}
Under the flow (\ref{eq:2.1.201811}), the curvature of the evolving curve has a limit:
\begin{eqnarray}\label{eq:3.8.201811}
\lim_{t\rightarrow +\infty} \kappa (\theta, t) = \frac{2\pi}{L}.
\end{eqnarray}
\end{lemma}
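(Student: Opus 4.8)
The plan is to leverage the monotonicity of the enclosed area from Lemma \ref{lem:2.3.201811}, together with the uniform bounds $0<m\le\kappa\le M$ (Lemmas \ref{lem:2.4.201811} and \ref{lem:3.3.201811}) and the equicontinuity of $\kappa(\cdot,t)$ underlying (\ref{eq:3.7.201811}). I would first extract a time sequence along which the area has vanishing speed, identify the corresponding subsequential limit curvature as that of a circle via the equality case of Andrews' inequality, pin down the limiting area, and then upgrade to genuine convergence by a uniqueness-of-subsequential-limits argument.

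First I would note that $L$ is preserved and $A(t)$ is non-decreasing (Lemma \ref{lem:2.3.201811}), while the isoperimetric inequality gives $A(t)\le L^2/(4\pi)$; hence $A(t)$ converges to some $A_\infty$ and $\int_0^{+\infty}A'(t)\,dt<\infty$. Since $A'(t)=-\int_0^{L}F(\kappa)\,ds+\frac{L}{2\pi}\int_0^{L}F(\kappa)\kappa\,ds\ge0$, there is $t_i\to+\infty$ with $A'(t_i)\to0$. Using $0<m\le\kappa\le M$ on $[0,2\pi]\times[0,+\infty)$ and the equicontinuity of $\kappa(\cdot,t)$ in $\theta$, Arzelà–Ascoli lets me pass (along a further subsequence) to a uniform limit $\kappa(\cdot,t_i)\to\kappa_\infty$ with $\kappa_\infty$ continuous, $m\le\kappa_\infty\le M$, $\int_0^{2\pi}d\theta/\kappa_\infty=L$, and the closing condition (\ref{eq:2.3.201811}) inherited. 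Passing to the limit in $A'(t_i)\to0$ (continuity of $F$ on $[m,M]$) gives
\[
\int_0^{L}F(\kappa_\infty)\,ds=\frac{L}{2\pi}\int_0^{L}F(\kappa_\infty)\kappa_\infty\,ds .
\]
Writing $ds=d\theta/\kappa_\infty$, this is precisely equality in Chebyshev's integral inequality for the comonotone pair $F(\kappa_\infty(\cdot))$ and $\kappa_\infty(\cdot)$ on $[0,L]$ (equivalently, equality in Andrews' inequality, Lemma I3.3 of \cite{Andrews-1998}); its equality case, valid for continuous positive $\kappa_\infty$, together with the strict monotonicity of $F$, forces $\kappa_\infty$ to be constant, and then $\int_0^{2\pi}d\theta/\kappa_\infty=L$ gives $\kappa_\infty\equiv2\pi/L$. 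Consequently the curves $X(\cdot,t_i)$, translated suitably, converge in $C^1$ to a circle of circumference $L$, so $A(t_i)\to L^2/(4\pi)$, whence $A_\infty=L^2/(4\pi)$ and, $A$ being monotone, $A(t)\uparrow L^2/(4\pi)$ as $t\to+\infty$.

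Then I would prove the full statement by uniqueness of subsequential limits. Let $s_j\to+\infty$ be arbitrary; Arzelà–Ascoli gives a subsequence with $\kappa(\cdot,s_j)\to\bar\kappa$ uniformly, and $\bar\kappa$ is the curvature (as a function of the tangent angle) of a convex curve of length $L$ enclosing area $\lim_j A(s_j)=A_\infty=L^2/(4\pi)$; by the equality case of the isoperimetric inequality this curve is a circle, so $\bar\kappa\equiv2\pi/L$. Thus every uniform subsequential limit of the precompact family $\{\kappa(\cdot,t)\}_{t\ge0}\subset C^0(S^1)$ equals $2\pi/L$, and therefore $\kappa(\cdot,t)\to2\pi/L$ uniformly on $S^1$ as $t\to+\infty$, which contains (\ref{eq:3.8.201811}).

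The crux is pinning down $A_\infty$: knowing only that $A'(t_i)\to0$ along \emph{some} sequence is useless by itself, so one must genuinely identify the subsequential limit curvature and read the limiting area off it; once $A_\infty=L^2/(4\pi)$ is established the rest is soft. The one technical obligation is the equality case of Andrews' inequality for merely continuous positive $\kappa_\infty$, which I would obtain from the equality case of Chebyshev's integral inequality as above rather than quoting it. An alternative to the first part of the argument avoids $A(t)$ altogether: the functional $\mathcal{E}(t):=\tfrac12\int_0^{2\pi}(\partial_\theta F)^2\,d\theta-\tfrac12\int_0^{2\pi}(F-\lambda)^2\,d\theta$ satisfies $\mathcal{E}'(t)=-\int_0^{2\pi}(\partial_t F)^2/(F'(\kappa)\kappa^2)\,d\theta\le0$ and $\mathcal{E}\ge0$ by Wirtinger's inequality on $S^1$, so $\int_0^{+\infty}\!\int_0^{2\pi}(\partial_t F)^2\,d\theta\,dt<\infty$; along a suitable sequence $\partial_\theta^2F+F-\lambda\to0$, hence a subsequential limit satisfies $F_\infty=\lambda_\infty+a\cos\theta+b\sin\theta$, and $a=b=0$ then follows from the closing condition (\ref{eq:2.3.201811}) by a rearrangement comparison against $\cos\theta$.
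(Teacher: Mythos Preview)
Your main argument is correct and follows a genuinely different path from the paper's. The paper first proves the \emph{full} limit $\frac{dA}{dt}\to 0$ by computing $\frac{d^2A}{dt^2}$ explicitly and bounding it from above (using the uniform bounds on $\kappa$, $F'$, $|F''|$ and on $\int_0^{2\pi}(\partial_\theta F)^2 d\theta$); once $\frac{dA}{dt}\to 0$ along every sequence, each subsequential limit $\kappa_\infty$ satisfies equality in Andrews' inequality and is therefore $2\pi/L$. You instead extract only a \emph{single} sequence with $A'(t_i)\to 0$ from $\int_0^\infty A'<\infty$, identify that one limit via the Chebyshev/Andrews equality case, read off $A_\infty=L^2/(4\pi)$, and then upgrade to full convergence via the equality case of the isoperimetric inequality. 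Your route avoids the $\frac{d^2A}{dt^2}$ computation entirely, which is a genuine economy; the paper's route, by contrast, never invokes the isoperimetric equality case and treats all subsequences uniformly through Andrews' inequality. Both are clean; yours is a bit more elementary, the paper's is a bit more direct once the derivative bound is in hand.

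One remark on your alternative sketch via the Lyapunov functional $\mathcal{E}(t)=\tfrac12\int(\partial_\theta F)^2-\tfrac12\int(F-\lambda)^2$: the monotonicity computation is exactly the one the paper carries out just above (\ref{eq:3.2.201811}), so this part is sound. However, your claimed endgame ``$a=b=0$ from the closing condition by a rearrangement comparison against $\cos\theta$'' is not obvious as stated: the closing condition is on $1/\kappa_\infty$, while the ODE you obtain is for $F(\kappa_\infty)$, and passing between the two through a general strictly increasing $F$ via rearrangement needs a real argument. Since you present this only as an alternative and your primary proof is complete, this does not affect the correctness of your proposal.
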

\begin{proof}
By the evolution equations of $A$  and $F$, one can compute
\begin{eqnarray*}
\frac{d^2A}{dt^2} &=& \frac{d}{dt} \left[-\int_0^{2\pi} \frac{F}{k} d\theta+ L\lambda\right]
\\
&=& -\int_0^{2\pi} F^\prime \kappa \left(\frac{\partial^2 F}{\partial \theta^2}+F-\lambda(t)\right)d\theta
   +\int_0^{2\pi} F\left(\frac{\partial^2 F}{\partial \theta^2}+F-\lambda(t)\right)d\theta
\\
&& +\frac{L}{2\pi} \int_0^{2\pi} F^\prime \kappa^2 \left(\frac{\partial^2 F}{\partial \theta^2}+F-\lambda(t)\right)d\theta
\\
&=& \int_0^{2\pi} \frac{F^{\prime\prime}}{F^\prime} \kappa \left(\frac{\partial F}{\partial \theta}\right)^2 d\theta
     +\int_0^{2\pi} \left(\frac{\partial F}{\partial \theta}\right)^2  d\theta
     -\int_0^{2\pi} F^{\prime} \kappa F  d\theta
     +\lambda \int_0^{2\pi} F^{\prime} \kappa d\theta
\\
&&   -\int_0^{2\pi} \left(\frac{\partial F}{\partial \theta}\right)^2  d\theta
     +\int_0^{2\pi} F^2 d\theta - \lambda \int_0^{2\pi} F d\theta
\\
&&
    -\frac{L}{2\pi} \int_0^{2\pi} \kappa^2 \frac{F^{\prime\prime}}{F^\prime}
            \left(\frac{\partial F}{\partial \theta}\right)^2 d\theta
    - \frac{L}{\pi} \int_0^{2\pi} \left(\frac{\partial F}{\partial \theta}\right)^2 \kappa d\theta
\\
&&
    + \frac{L}{2\pi} \int_0^{2\pi} F^{\prime} \kappa^2 F  d\theta
    - \frac{\lambda L}{2\pi} \int_0^{2\pi} F^{\prime} \kappa^2 d\theta
\\
&\leq&  \int_0^{2\pi} \frac{|F^{\prime\prime}|}{F^\prime} \kappa \left(\frac{\partial F}{\partial \theta}\right)^2 d\theta
        +\lambda \int_0^{2\pi} F^{\prime} \kappa d\theta
        +\int_0^{2\pi} F^2 d\theta
\\
&&    +\frac{L}{2\pi} \int_0^{2\pi} \kappa^2 \frac{|F^{\prime\prime}|}{F^\prime}
            \left(\frac{\partial F}{\partial \theta}\right)^2 d\theta
    + \frac{L}{2\pi} \int_0^{2\pi} F^{\prime} \kappa^2 F  d\theta.
\end{eqnarray*}
Using upper bounds of $F, |F^{\prime\prime}|, F^\prime$ and $\kappa$ and the positive lower bound of $F^\prime$,
one may bound $\frac{d^2A}{dt^2}$ as follows.
\begin{eqnarray*}
\frac{d^2A}{dt^2} &\leq&  \frac{C_2}{c_1}M\int_0^{2\pi} \left(\frac{\partial F}{\partial \theta}\right)^2 d\theta
        +2\pi F(M) C_1 M + 2\pi F(M)^2
\\
&&    +\frac{L}{2\pi}M^2 \frac{C_2}{c_1} \int_0^{2\pi} \left(\frac{\partial F}{\partial \theta}\right)^2 d\theta
    + LC_1M^2 F(M).
\end{eqnarray*}
By (\ref{eq:3.2.201811}), $\frac{d^2A}{dt^2}$ has an upper bound independent of time and $\theta$.

Noticing that $\frac{dA}{dt}$ is nonnegative (see Lemma \ref{lem:2.3.201811}) and
\begin{eqnarray*}
\int_0^{+\infty} \frac{dA}{dt} dt = \lim_{t\rightarrow \infty} A(t) -A_0 \leq \frac{L^2}{4\pi},
\end{eqnarray*}
one has a limit
\begin{eqnarray}\label{eq:3.9.201811}
\lim_{t\rightarrow \infty} \frac{dA}{dt} =0.
\end{eqnarray}
Substituting a convergent subsequence $\kappa (\theta, t_i)$ into the evolution equation of $A$ yields
\begin{eqnarray*}
\frac{dA}{dt} (t_i)= -\int_0^{2\pi} \frac{F(\kappa(\theta, t_i))}{\kappa(\theta, t_i)}d\theta
      + \frac{L}{2\pi}\int_0^{2\pi} F(\kappa(\theta, t_i))d\theta.
\end{eqnarray*}
Denote by $\kappa_\infty (\theta)$ the limit of the $\kappa (\theta, t_i)$ as $t_i \rightarrow +\infty$. Let $t_i \rightarrow +\infty$,
then one can get
\begin{eqnarray*}
0 = -\int_0^{2\pi} \frac{F(\kappa_\infty(\theta))}{\kappa_\infty(\theta)}d\theta
    +\frac{L}{2\pi}\int_0^{2\pi} F(\kappa_\infty(\theta))d\theta.
\end{eqnarray*}
It follows from the equality case in Andrews' inequality that $\kappa_\infty (\theta)$ is a constant.
Since the flow (\ref{eq:2.1.201811}) preserves the length, this constant is $2\pi/ L$, i.e.,
\begin{eqnarray*}
\kappa_\infty (\theta) = \frac{2\pi}{L}.
\end{eqnarray*}

It is shown that all convergent subsequence of $\kappa (\theta, t)$ tend to the same limit $\frac{2\pi}{L}$,
therefore,  the curvature function also has the same limit. The limit (\ref{eq:3.8.201811}) is proved.
\end{proof}

According (\ref{eq:3.4.201811}) and (\ref{eq:3.6.201811}), the evolution equation of $\kappa$ is uniformly parabolic with smooth coefficients.
Set $a_{11} := \kappa^2F^{\prime}(\kappa)$. By the uniform bounds of $\kappa$ (see the equations (\ref{eq:3.4.201811}) and (\ref{eq:2.11.201811})),
there exist positive constants $\beta_1, \beta_2$ independent of time such that
$$1 \leq  \beta_2 a_{11} x^2$$
for all $|x| \geq \beta_1$. By Lieberman's local gradient estimate (Theorem 11.18 in \cite{Lieberman-1996}) and the fact
that $F(\kappa(\theta, t))$ is periodic with respect to $\theta$, there exists a constant $M_1$ independent of time
such that
\begin{eqnarray}\label{eq:3.10.201811}
\left|\frac{\partial F}{\partial \theta}(\theta, t)\right| \leq \widetilde{M}_1,~~ (\theta, t)\in [0, 2\pi] \times (0, +\infty).
\end{eqnarray}
For the sake of completeness, we present a new proof of the gradient estimate \eqref{eq:3.10.201811} in Appendix of this paper, by revising the estimates in the
Subsection \ref{sec:2.4.201811}. Comparing to Lieberman's theory, the method used there is relatively elementary.

Using the regularity theory of parabolic equations or the simpler method by Lin-Tsai \cite{Lin-Tsai-2012}, there exist constants $\widetilde{M}_i$ independent of time such that
\begin{eqnarray}\label{eq:3.11.201811}
\left|\frac{\partial^i F}{\partial \theta^i}(\theta, t)\right| \leq \widetilde{M}_i,~~ (\theta, t)\in [0, 2\pi] \times (0, +\infty), ~~i=2, 3, \cdots.
\end{eqnarray}
The proof of \eqref{eq:3.11.201811} is a routine repetition of the proof of Lemma \ref{lem:2.8.201811}. We omit the details here.
By the uniform bounds of $\kappa$ and \eqref{eq:3.5.201811}-\eqref{eq:3.6.201811}, there exist constants $M_i$ independent of time such that
\begin{eqnarray}\label{eq:3.12.201811}
\left|\frac{\partial^i \kappa}{\partial \theta^i}(\theta, t)\right| \leq M_i, ~~(\theta, t)\in [0, 2\pi] \times (0, +\infty), ~~i=1, 2, 3, \cdots.
\end{eqnarray}
From (\ref{eq:2.17.201811}), (\ref{eq:3.10.201811}), (\ref{eq:3.11.201811}), (\ref{eq:3.12.201811}) and the limit (\ref{eq:3.8.201811}), one can conclude
\begin{eqnarray}\label{eq:3.13.201811}
\lim_{t\rightarrow +\infty}\frac{\partial^i \kappa}{\partial \theta^i}(\theta, t)
=\lim_{t\rightarrow +\infty}\frac{\partial^i F}{\partial \theta^i}(\theta, t) =0, ~~i=1, 2, \cdots.
\end{eqnarray}
This is the $C^\infty$ convergence of the curvature.

\section{Convergence of the evolving curve}\label{sec:4.201811}
Until now, the evolving curve has been proved asymptotically circular by the limit (\ref{eq:3.8.201811}). In order to prove that the flow (\ref{eq:2.1.201811})
can efficiently evolve $X(\cdot, t)$ into a circle, one needs to show that the evolving curve indeed has a limit as time
goes to infinity. In fact, if the speed of the flow (\ref{eq:2.1.201811}) exponentially decays
then the evolving curve $X(\cdot, t)$ converges to a fixed limiting curve as time goes to infinity.

By the evolution equation of $F$ (see (\ref{eq:2.8.201811})), one may compute
\begin{eqnarray*}
&& \frac{d}{dt}\frac{1}{2}\int_0^{2\pi} \left(\frac{\partial F}{\partial \theta}\right)^2 d\theta
\ =\ -\int_0^{2\pi} \frac{\partial^2 F}{\partial \theta^2} \frac{\partial F}{\partial t} d\theta
\\
&=& -\int_0^{2\pi} \frac{\partial^2 F}{\partial \theta^2} F^\prime \kappa^2
     \left[\frac{\partial^2 F}{\partial \theta^2} +F-\lambda(t) \right] d\theta
\\
&=& -\int_0^{2\pi} F^\prime \kappa^2 \left(\frac{\partial^2 F}{\partial \theta^2}\right)^2 d\theta
    +\int_0^{2\pi} F^\prime \kappa^2 \left(\frac{\partial F}{\partial \theta}\right)^2 d\theta
    \\
   && +\int_0^{2\pi} \frac{F^{\prime\prime}}{F^\prime} \kappa^2 (F-\lambda)\left(\frac{\partial F}{\partial \theta}\right)^2 d\theta
   + 2\int_0^{2\pi} \kappa (F-\lambda)\left(\frac{\partial F}{\partial \theta}\right)^2 d\theta.
\end{eqnarray*}
The limit (\ref{eq:3.8.201811}) says that the curvature converges to $2\pi/L$. So one obtians
\begin{eqnarray*}
\lim_{t\rightarrow +\infty} F^\prime(\kappa)\cdot \kappa^2 = F^\prime\left(\frac{2\pi}{L}\right)\cdot \left(\frac{2\pi}{L}\right)^2
\end{eqnarray*}
and
\begin{eqnarray*}
\lim_{t\rightarrow +\infty} (F(\kappa) - \lambda(t))= \lim_{t\rightarrow +\infty} \left(F(\kappa) - \frac{1}{2\pi}\int_0^{2\pi} F(\kappa) d \theta \right)= 0.
\end{eqnarray*}
For any $\varepsilon >0$, there exists $T_0>0$ such that, if $t>T_0$ then $|F-\lambda| < \varepsilon$ and
\begin{eqnarray*}
F^\prime \left(\frac{2\pi}{L}\right)\cdot \left(\frac{2\pi}{L}\right)^2 -\varepsilon \leq F^\prime(\kappa)\cdot \kappa^2 \leq
F^\prime \left(\frac{2\pi}{L}\right)\cdot \left(\frac{2\pi}{L}\right)^2 +\varepsilon.
\end{eqnarray*}
So, one has the following estimate for large $t>T_0$,
\begin{eqnarray} \label{eq:4.1.201811}
\frac{d}{dt}\frac{1}{2}\int_0^{2\pi} \left(\frac{\partial F}{\partial \theta}\right)^2 d\theta
&\leq& -\left(F^\prime \left(\frac{2\pi}{L}\right)\cdot \left(\frac{2\pi}{L}\right)^2 -\varepsilon \right)
         \int_0^{2\pi} \left(\frac{\partial^2 F}{\partial \theta^2}\right)^2 d\theta
   \nonumber\\
   &&
    +\left(F^\prime \left(\frac{2\pi}{L}\right)\cdot \left(\frac{2\pi}{L}\right)^2 + \varepsilon \right)
         \int_0^{2\pi} \left(\frac{\partial F}{\partial \theta}\right)^2 d\theta
   \nonumber\\
   && +\left(\frac{C_2}{c_1}M^2 \varepsilon +2M\varepsilon \right)
      \int_0^{2\pi} \left(\frac{\partial F}{\partial \theta}\right)^2 d\theta.
\end{eqnarray}
Since $$\frac{\partial F}{\partial \theta}=  F^\prime (\kappa)\frac{\partial}{\partial \theta}\left(\frac{1}{\rho}\right)
=  F^\prime (\kappa)\left(-\frac{1}{\rho^2}\frac{\partial \rho}{\partial \theta}\right)$$
and
\begin{eqnarray*}
\frac{\partial^2 F}{\partial \theta^2} &=& F^{\prime\prime} (\kappa)\left(\frac{1}{\rho^2}\frac{\partial \rho}{\partial \theta}\right)^2
  -F^\prime (\kappa) \frac{1}{\rho^2}\frac{\partial^2 \rho}{\partial \theta^2}
  +F^\prime (\kappa) \frac{2}{\rho^3} \left(\frac{\partial \rho}{\partial \theta}\right)^2
\\
&=& \frac{1}{\rho^3}\left(\frac{F^{\prime\prime}}{\rho} +2 F^\prime\right)\left(\frac{\partial \rho}{\partial \theta}\right)^2
  -F^\prime (\kappa) \frac{1}{\rho^2}\frac{\partial^2 \rho}{\partial \theta^2},
\end{eqnarray*}
one may compute
\begin{eqnarray*}
\int_0^{2\pi} \left(\frac{\partial^2 F}{\partial \theta^2}\right)^2 d\theta
&=&  \int_0^{2\pi} \frac{1}{\rho^6}\left(\frac{F^{\prime\prime}}{\rho} +2 F^\prime\right)^2 \left(\frac{\partial \rho}{\partial \theta}\right)^4 d\theta
    + \int_0^{2\pi} (F^\prime)^2 \frac{1}{\rho^4} \left(\frac{\partial^2 \rho}{\partial \theta^2}\right)^2 d\theta
\\
&& -2 \int_0^{2\pi} F^\prime \frac{1}{\rho^5} \left(\frac{F^{\prime\prime}}{\rho} +2 F^\prime\right)
   \left(\frac{\partial \rho}{\partial \theta}\right)^2\frac{\partial^2 \rho}{\partial \theta^2}d\theta.
\end{eqnarray*}
It follows from the convergence of $\kappa$ that one has the convergence
\begin{eqnarray*}
\lim\limits_{t \rightarrow\infty} \frac{\partial \rho}{\partial \theta}
= \lim\limits_{t \rightarrow\infty} \frac{\partial^2 \rho}{\partial \theta^2} = 0.
\end{eqnarray*}
So, there exists $\widetilde{\varepsilon} \in (0, \varepsilon)$ such that, at large time,
\begin{eqnarray*}
\int_0^{2\pi} \left(\frac{\partial^2 F}{\partial \theta^2}\right)^2 d\theta
&\geq&  \int_0^{2\pi} \left[\left(F^\prime\left(\frac{2\pi}{L}\right) \right)^2\left(\frac{2\pi}{L}\right)^4  -\widetilde{\varepsilon} \right]
\left(\frac{\partial^2 \rho}{\partial \theta^2}\right)^2 d\theta
\\
&& -\widetilde{\varepsilon} \int_0^{2\pi} \left(\frac{\partial \rho}{\partial \theta}\right)^2 d\theta.
\end{eqnarray*}
Using next Wirtinger type inequality,
\begin{eqnarray*}
\int_0^{2\pi} \left(\frac{\partial^2 \rho}{\partial \theta^2}\right)^2 d\theta
\geq 4 \int_0^{2\pi} \left(\frac{\partial \rho}{\partial \theta}\right)^2 d\theta
\end{eqnarray*}
and considering at large time, one could estimate
\begin{eqnarray*}
\int_0^{2\pi} \left(\frac{\partial^2 F}{\partial \theta^2}\right)^2 d\theta
&\geq& 4\int_0^{2\pi} \left[\left(F^\prime\left(\frac{2\pi}{L}\right) \right)^2\left(\frac{2\pi}{L}\right)^4  - \frac{5\widetilde{\varepsilon}}{4}\right]
      \left(\frac{\partial \rho}{\partial \theta}\right)^2 d\theta
\\
&\geq& 4 \int_0^{2\pi} \left[(F^\prime(\kappa))^2 \frac{1}{\rho^4}  - 2\widetilde{\varepsilon}\right]
      \int_0^{2\pi} \left(\frac{\partial \rho}{\partial \theta}\right)^2 d\theta.
\end{eqnarray*}
Since $(F^\prime(\kappa))^2 \frac{1}{\rho^4}$ has uniform lower and upper bounds independent of time, one may choose $\widetilde{\varepsilon}$
small enough such that
\begin{eqnarray} \label{eq:4.2.201811}
\int_0^{2\pi} \left(\frac{\partial^2 F}{\partial \theta^2}\right)^2 d\theta
\geq 2 \int_0^{2\pi} (F^\prime(\kappa))^2 \frac{1}{\rho^4} \left(\frac{\partial \rho}{\partial \theta}\right)^2 d\theta
= 2\int_0^{2\pi} \left(\frac{\partial F}{\partial \theta}\right)^2 d\theta.
\end{eqnarray}
Substituting (\ref{eq:4.2.201811}) into (\ref{eq:4.1.201811}), one obtains
\begin{eqnarray*}
\frac{d}{dt}\frac{1}{2}\int_0^{2\pi} \left(\frac{\partial F}{\partial \theta}\right)^2 d\theta
\leq \left(-F^\prime\left(\frac{2\pi}{L}\right)\left(\frac{2\pi}{L}\right)^2 +3\varepsilon
        +\frac{C_2}{c_1}M^2 \varepsilon +2M\varepsilon\right)
       \int_0^{2\pi} \left(\frac{\partial F}{\partial \theta}\right)^2 d\theta.
\end{eqnarray*}

Let $\varepsilon>0$ be small enough such that
$3\varepsilon +\frac{C_2}{c_1}M^2 \varepsilon +2M\varepsilon
  < \frac{1}{2}F^\prime(\frac{2\pi}{L}) \left(\frac{2\pi}{L}\right)^2$. Then
one gets for large $t$,
\begin{eqnarray} \label{eq:4.3.201811}
\frac{d}{dt}\int_0^{2\pi} \left(\frac{\partial F}{\partial \theta}\right)^2 d\theta
\leq - \frac{1}{2} F^\prime \left(\frac{2\pi}{L} \right)\cdot \left(\frac{2\pi}{L}\right)^2
       \int_0^{2\pi} \left(\frac{\partial F}{\partial \theta}\right)^2 d\theta.
\end{eqnarray}
Thus $\int_0^{2\pi} \left(\frac{\partial F}{\partial \theta}\right)^2 d\theta$
decays exponentially.

\begin{lemma}\label{lem:4.1.201811}
If $t$ is large enough then the speed of the flow (\ref{eq:2.1.201811}) decays exponentially.
\end{lemma}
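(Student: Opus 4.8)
The plan is to upgrade the exponential $L^2$-decay of $\partial F/\partial\theta$ just obtained in (\ref{eq:4.3.201811}) to pointwise exponential decay of the whole velocity field, using the time-independent derivative bounds from Section \ref{sec:3.201811}. First I would write the velocity explicitly: since $\frac{\partial F}{\partial s}=\kappa\frac{\partial F}{\partial\theta}$, the flow equation (\ref{eq:2.1.201811}) gives $\frac{\partial\widetilde X}{\partial t}=-\frac{\partial F}{\partial\theta}\,T+(F-\lambda(t))\,N$, so that $\left|\frac{\partial\widetilde X}{\partial t}\right|^2=\left(\frac{\partial F}{\partial\theta}\right)^2+(F-\lambda(t))^2$. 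It therefore suffices to show each of these two nonnegative terms tends to $0$ exponentially, uniformly in $\theta$.

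Next I would bound the normal part by the tangential part in $L^2$. Because $\lambda(t)=\frac{1}{2\pi}\int_0^{2\pi}F\,d\theta$ is exactly the mean value of $F(\kappa(\cdot,t))$ over $S^1$, the function $F-\lambda(t)$ has zero average, so Wirtinger's inequality yields
\[
\int_0^{2\pi}(F-\lambda(t))^2\,d\theta\le\int_0^{2\pi}\left(\frac{\partial F}{\partial\theta}\right)^2 d\theta ,
\]
hence $\int_0^{2\pi}\left|\frac{\partial\widetilde X}{\partial t}\right|^2 d\theta\le 2\int_0^{2\pi}\left(\frac{\partial F}{\partial\theta}\right)^2 d\theta$, which by (\ref{eq:4.3.201811}) is bounded by $C_0e^{-\gamma t}$ for suitable $C_0,\gamma>0$ once $t$ is large.

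Finally I would pass from integral decay to sup-norm decay by interpolation. By (\ref{eq:3.10.201811})--(\ref{eq:3.12.201811}) the quantities $\frac{\partial F}{\partial\theta}$, $\frac{\partial^2 F}{\partial\theta^2}$, $F-\lambda$ and $\frac{\partial}{\partial\theta}(F-\lambda)=\frac{\partial F}{\partial\theta}$ are all uniformly bounded on $[0,2\pi]\times(0,+\infty)$, so $w(\theta,t):=\left|\frac{\partial\widetilde X}{\partial t}\right|^2$ has a $\theta$-derivative bounded by a constant $C$ independent of $t$. If $w(\cdot,t)$ attains its maximum $\mu(t)$ at $\theta_0$, then $w(\cdot,t)\ge\mu(t)/2$ on a $\theta$-interval of length at least $\mu(t)/(2C)$, whence $\frac{\mu(t)^3}{8C}\le\int_0^{2\pi}w\,d\theta\le C_0e^{-\gamma t}$ and therefore $\mu(t)\le C_1 e^{-\gamma t/3}$; equivalently $\left|\frac{\partial\widetilde X}{\partial t}(\theta,t)\right|\le C_2 e^{-\gamma t/6}$ for all $\theta$ and all large $t$.

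The only genuinely delicate point is the last step, where one must be sure the derivative bounds used in the interpolation are truly time-independent; this is exactly what Lemma \ref{lem:3.3.201811} (the uniform lower bound on $\kappa$) together with (\ref{eq:3.11.201811})--(\ref{eq:3.12.201811}) guarantees, so no new work is needed beyond assembling these ingredients. Everything else is a short computation.
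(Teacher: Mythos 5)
Your argument is correct in substance and reaches the right conclusion, but it takes a slightly different route from the paper's. The paper interprets the ``speed'' as the normal speed $|F-\lambda|$ and bounds it pointwise by the one--dimensional Sobolev inequality $\|F-\lambda\|_{\infty}\le \tfrac{1}{\sqrt{2\pi}}\|F-\lambda\|_{2}+\sqrt{2\pi}\,\|\partial_\theta F\|_{2}$, combined with the same Wirtinger step you use, so that everything is controlled by $\int_0^{2\pi}(\partial F/\partial\theta)^2\,d\theta$, which decays exponentially by (\ref{eq:4.3.201811}). You instead decompose the full velocity of (\ref{eq:2.1.201811}) into $-\,\partial_\theta F\,T+(F-\lambda)N$ and then upgrade the $L^2$ decay of $w=(\partial_\theta F)^2+(F-\lambda)^2$ to sup-norm decay by interpolating against the time-independent $C^1$ bounds (\ref{eq:3.11.201811})--(\ref{eq:3.12.201811}). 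This buys you something the paper's proof quietly skips: exponential decay of the tangential component $-\partial_\theta F\,T$ as well, which is actually what one needs to integrate (\ref{eq:2.1.201811}) in time in Theorem \ref{thm:4.2.201811}; the paper would have to note separately that the tangential part is a reparameterization or apply Sobolev again to $\partial_\theta F$. One small arithmetic slip: if $w$ attains its maximum $\mu(t)$ and $|\partial_\theta w|\le C$, then $w\ge \mu/2$ on an interval of length $\mu/(2C)$ gives $\int_0^{2\pi} w\,d\theta\ge \mu^2/(4C)$, not $\mu^3/(8C)$; this only improves your decay rate to $\mu(t)\le C_1e^{-\gamma t/2}$ and does not affect the conclusion.
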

\begin{proof}
It follows from the classical Sobolev's inequality, one has
\begin{eqnarray}\label{eq:4.4.201811}
|F-\lambda| \leq \frac{1}{\sqrt{2\pi}}\left(\int_0^{2\pi} \left( F-\lambda \right)^2 d\theta\right)^{\frac{1}{2}}
 + \sqrt{2\pi} \left(\int_0^{2\pi} \left(\frac{\partial F}{\partial \theta}\right)^2 d\theta\right)^{\frac{1}{2}}.
\end{eqnarray}
Since $\int_0^{2\pi} (F- \lambda) d\theta =0$, Wirtinger's inequality implies that
\begin{eqnarray}\label{eq:4.5.201811}
\int_0^{2\pi} (F- \lambda)^2 d\theta \leq \int_0^{2\pi} \left(\frac{\partial F}{\partial \theta}\right)^2 d\theta.
\end{eqnarray}
Substituting (\ref{eq:4.5.201811}) into (\ref{eq:4.4.201811}) yields
\begin{eqnarray*}
|F-\lambda| \leq \left(\frac{1}{\sqrt{2\pi}} + \sqrt{2\pi}\right) \left(\int_0^{2\pi} \left(\frac{\partial F}{\partial \theta}\right)^2 d\theta\right)^{\frac{1}{2}}.
\end{eqnarray*}
Since $\int_0^{2\pi} \left(\frac{\partial F}{\partial \theta}\right)^2 d\theta$
exponentially decays when $t > T_0$, one has shown that $|F-\lambda|$ also has exponential decay.
\end{proof}

\begin{theorem}\label{thm:4.2.201811}
Under the flow (\ref{eq:2.1.201811}), the evolving curve converges to a limiting circle as time
goes to infinity.
\end{theorem}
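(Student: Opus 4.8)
The plan is to read off the convergence of the evolving curve from the exponential decay of its speed that has just been established. Under the flow (\ref{eq:2.1.201811}) the tangent angle $\theta$ is a time-independent parameter, so at a fixed parameter value the Frenet frame $\{T(\theta),N(\theta)\}$ does not depend on $t$; since moreover $\frac{1}{\kappa}\frac{\partial F}{\partial s}=\frac{\partial F}{\partial\theta}$, the velocity decomposes into a tangential and a normal part along orthogonal directions, so
\begin{equation*}
\left|\frac{\partial\widetilde{X}}{\partial t}(\theta,t)\right|^2=\left(\frac{\partial F}{\partial\theta}(\theta,t)\right)^2+\bigl(F(\kappa(\theta,t))-\lambda(t)\bigr)^2 .
\end{equation*}
First I would upgrade the $L^2$-decay in (\ref{eq:4.3.201811}) to a pointwise one: since $\partial_\theta F$ has zero mean over $S^1$, the elementary interpolation inequality
\begin{equation*}
\sup_\theta\left|\frac{\partial F}{\partial\theta}\right|^2\le\frac{1}{2\pi}\int_0^{2\pi}\left(\frac{\partial F}{\partial\theta}\right)^2 d\theta+2\left(\int_0^{2\pi}\left(\frac{\partial F}{\partial\theta}\right)^2 d\theta\right)^{1/2}\left(\int_0^{2\pi}\left(\frac{\partial^2 F}{\partial\theta^2}\right)^2 d\theta\right)^{1/2},
\end{equation*}
together with the time-uniform bound on $\int_0^{2\pi}(\partial_\theta^2 F)^2\,d\theta$ from Section \ref{sec:3.201811} and the exponential decay in (\ref{eq:4.3.201811}), shows that $\sup_\theta|\partial_\theta F(\theta,t)|$ decays exponentially; combined with Lemma \ref{lem:4.1.201811} this yields constants $C,\mu>0$ such that $\left|\frac{\partial\widetilde{X}}{\partial t}(\theta,t)\right|\le Ce^{-\mu t}$ for all $\theta$ and all large $t$.

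Next I would deduce the convergence of the curve itself. For all sufficiently large $t_1<t_2$,
\begin{equation*}
\bigl|\widetilde{X}(\theta,t_2)-\widetilde{X}(\theta,t_1)\bigr|\le\int_{t_1}^{t_2}\left|\frac{\partial\widetilde{X}}{\partial t}(\theta,t)\right|dt\le\frac{C}{\mu}e^{-\mu t_1}\longrightarrow 0
\end{equation*}
as $t_1\rightarrow+\infty$, uniformly in $\theta$, so $\widetilde{X}(\cdot,t)$ is uniformly Cauchy on $S^1$ and converges uniformly to a limiting map $\widetilde{X}_\infty:S^1\rightarrow\mathbb{R}^2$. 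Since $\kappa$ is pinched between two positive constants (see (\ref{eq:2.10.201811}) and (\ref{eq:3.4.201811})) and all its $\theta$-derivatives are bounded uniformly in time (see (\ref{eq:3.11.201811})), and since $\frac{\partial\widetilde{X}}{\partial\theta}=\frac{1}{\kappa}T(\theta)$ with $T(\theta)$ a fixed smooth frame field, every $\theta$-derivative of $\widetilde{X}(\cdot,t)$ is bounded uniformly in time; by the Arzel\`{a}-Ascoli theorem the convergence then improves to convergence in $C^\infty$, so $\widetilde{X}_\infty$ is a smooth closed curve.

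Finally I would identify the limit. By Lemma \ref{lem:3.4.201811} the curvature of $\widetilde{X}_\infty$ equals $\lim_{t\rightarrow+\infty}\kappa(\theta,t)=\frac{2\pi}{L}$, a positive constant, and a smooth closed curve of constant curvature $\frac{2\pi}{L}$ is a circle of radius $\frac{L}{2\pi}$; its length is $L$, consistent with the length preservation of Lemma \ref{lem:2.3.201811}. Hence $\widetilde{X}(\cdot,t)$ converges in $C^\infty$ to a limiting circle of radius $\frac{L}{2\pi}$, and, undoing the reparametrization and translation relating (\ref{eq:2.1.201811}) to (\ref{eq:1.1.201811}), the same conclusion holds for the original flow. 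I do not expect a genuine obstacle at this stage: the delicate analytic input --- the exponential decay (\ref{eq:4.3.201811}) and its consequence Lemma \ref{lem:4.1.201811} --- is already in place, and the only steps needing a little care are the passage from $L^2$-decay to pointwise decay of the speed and the check that the $C^\infty$ limit is an embedded circle rather than a degenerate curve, both of which follow from the time-uniform higher-order estimates of Section \ref{sec:3.201811}.
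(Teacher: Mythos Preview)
Your proposal is correct and follows the same route as the paper: use the exponential decay of the speed established in Lemma~\ref{lem:4.1.201811} and (\ref{eq:4.3.201811}) to conclude, by integrating the flow equation, that $\widetilde{X}(\cdot,t)$ is uniformly Cauchy and hence converges, and then identify the limit as a circle via Lemma~\ref{lem:3.4.201811}. The paper's own proof is a single sentence to this effect; your version is considerably more detailed --- in particular you explicitly control the tangential component $-\frac{\partial F}{\partial\theta}\,T$ of the velocity in (\ref{eq:2.1.201811}) via an interpolation inequality, whereas the paper mentions only the normal speed $|F-\lambda|$ --- but this extra care does not constitute a genuinely different approach.
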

\begin{proof}
Since the speed $|F-\lambda|$ of the flow decays exponentially if $t$ is large enough, one could claim that the limit
$\int_0^{t} (F- \lambda) d\tau$ exists as $t \rightarrow + \infty$. Therefore, integrating the flow equation, one gets the the limit of the evolving curves as follows
\begin{eqnarray}\label{eq:4.6.201811}
\lim_{t \rightarrow + \infty} \widetilde{X}(\cdot, t) = X_0(\cdot) +  \int_0^{+\infty} (F- \lambda) dt : = X_{\infty}(\cdot),
\end{eqnarray}
where $X_0$ is the initial curve of the flow. Since the flow preserves the length of the evolving curve and the curvature $\kappa(\cdot, t)$ converges to the
constant $\frac{2\pi}{L}$, the limiting curve $X_{\infty}$ is a circle with radius $\frac{L}{2\pi}$.
\end{proof}

Until now, we have proved that both the curvature and the evolving curve converge as $t\rightarrow +\infty$.
So the proof of Theorem \ref{thm:1.1.201811} is a combination of Theorem \ref{thm:2.5.201811},
Lemma \ref{lem:3.4.201811} and Theorem \ref{thm:4.2.201811}. Once the speed of the flow is proved decaying exponentially, the $C^{\infty}$ convergence
of the flow is a straightforward computation as did in the papers \cite{Gage-Hamilton, Lin-Tsai-2008}. For the sake of brevity, the related details are omitted.

\section{Appendix: uniform gradient estimate}\label{sec:5.201811}

Let $X(\cdot, t): [0, 2\pi] \times [0, + \infty) \rightarrow \mathbb{R}^2$ be a global solution to the flow \eqref{eq:2.1.201811} with convex
initial curve $X_0$. By \eqref{eq:2.11.201811} and \eqref{eq:3.4.201811}, the curvature $\kappa (\cdot, t)$ has lower and upper
bounds independent of time, i.e.,
$$0< m \leq \kappa \leq M, ~~~~(\theta, t)\in [0, 2\pi] \times [0, + \infty).$$
Recalling \eqref{eq:3.5.201811} and \eqref{eq:3.6.201811}, one has bounds of $F^{\prime}(\kappa)$ as follows
$$0< c_1 \leq F^{\prime}(\kappa) \leq C_1, ~~~~(\theta, t)\in [0, 2\pi] \times [0, + \infty).$$
In this appendix, we show that there exists a constant $\widetilde{M}_1$ independent of time such that
\begin{eqnarray*}
\left|\frac{\partial F}{\partial \theta}(\theta, t)\right| \leq \widetilde{M}_1, ~~(\theta, t)\in [0, 2\pi] \times (0, +\infty).
\end{eqnarray*}
Comparing to Lieberman's theory of gradient estimate \cite{Lieberman-1996}, the method used here is more elementary.

\begin{lemma}\label{lem:5.1.201811}
Let $p \geq 2$ be an integer. Under the flow (\ref{eq:2.1.201811}), we have the estimate
\begin{eqnarray}
\frac{d}{dt}\frac{1}{2p} \int_0^{2\pi} \left(\frac{\partial F}{\partial \theta}\right)^{2p} d\theta
&\leq &  - \frac{(2p-1) c_1 m^2}{2p^2} \int_0^{2\pi} \left(\frac{\partial F}{\partial \theta}\right)^{2p} d\theta
\nonumber \\
&& + \frac{(2p-1) c_1 m^2}{4 \pi p^2} \left[\int_0^{2\pi} \left(\frac{\partial F}{\partial \theta}\right)^{p} d\theta \right]^2
\nonumber \\
&& +\frac{(2p-1) C_1 M^2 F(M)^2}{2} \int_0^{2\pi} \left(\frac{\partial F}{\partial \theta}\right)^{2p-2} d\theta. \label{eq:5.1.201811}
\end{eqnarray}
\end{lemma}
\begin{proof}
We compute by the evolution equation of $F(\kappa (\theta, t))$ (see \eqref{eq:2.9.201811}),
\begin{eqnarray*}
\frac{d}{dt}\frac{1}{2p}\int_0^{2\pi} \left(\frac{\partial F}{\partial \theta}\right)^{2p} d\theta
&=&\int_0^{2\pi} \left(\frac{\partial F}{\partial \theta}\right)^{2p-1} \frac{\partial^2 F}{\partial \theta \partial t} d\theta
\\
&=& -(2p-1)\int_0^{2\pi} \left(\frac{\partial F}{\partial \theta}\right)^{2p-2}
   \frac{\partial^2 F}{\partial \theta^2} \frac{\partial F}{\partial t} d\theta
\\
&=& -(2p-1)\int_0^{2\pi}\left(\frac{\partial F}{\partial \theta}\right)^{2p-2}
       \frac{\partial^2 F}{\partial \theta^2} F^\prime \kappa^2
     \left[\frac{\partial^2 F}{\partial \theta^2} +F-\lambda(t) \right] d\theta
\\
&=& -(2p-1)\int_0^{2\pi}F^\prime \kappa^2 \left(\frac{\partial F}{\partial \theta}\right)^{2p-2}
       \left(\frac{\partial^2 F}{\partial \theta^2}\right)^2 d\theta
   \\
   &&-(2p-1)\int_0^{2\pi}F^\prime \kappa^2 (F-\lambda(t)) \left(\frac{\partial F}{\partial \theta}\right)^{2p-2}
       \frac{\partial^2 F}{\partial \theta^2} d\theta.
\end{eqnarray*}
Applying Cauchy-Schwarz Inequality, one gets
\begin{eqnarray}
\frac{d}{dt}\frac{1}{2p}\int_0^{2\pi} \left(\frac{\partial F}{\partial \theta}\right)^{2p} d\theta
&\leq&  -\frac{2p-1}{2} \int_0^{2\pi}F^\prime \kappa^2 \left(\frac{\partial F}{\partial \theta}\right)^{2p-2}
       \left(\frac{\partial^2 F}{\partial \theta^2}\right)^2 d\theta
\nonumber   \\
   && +\frac{(2p-1)}{2} \int_0^{2\pi}F^{\prime} \kappa^2 (F-\lambda(t))^2 \left(\frac{\partial F}{\partial \theta}\right)^{2p-2} d\theta. \label{eq:5.2.201811}
\end{eqnarray}

Using the lower bounds of $\kappa$ and $F^{\prime}(\kappa)$, one may estimate the first integral in the R.H.S. of \eqref{eq:5.2.201811} as
\begin{eqnarray*}
&& -\frac{2p-1}{2} \int_0^{2\pi}F^\prime \kappa^2 \left(\frac{\partial F}{\partial \theta}\right)^{2p-2}
       \left(\frac{\partial^2 F}{\partial \theta^2}\right)^2 d\theta
\\
&& ~~= -\frac{2p-1}{2p^2} \int_0^{2\pi} F^\prime \kappa^2
\left[ \frac{\partial }{\partial \theta}\left(\frac{\partial F}{\partial \theta}\right)^p \right]^2  d\theta
\\
&& ~~ \leq- \frac{(2p-1) c_1 m^2}{2p^2} \int_0^{2\pi}
\left[ \frac{\partial }{\partial \theta}\left(\frac{\partial F}{\partial \theta}\right)^p \right]^2  d\theta.
\end{eqnarray*}

Let $f=f(\theta)$ be a periodic and $C^1$ function on $[0, 2\pi]$. We have another Wirtinger's inequality for the function $f$:
\begin{eqnarray}\label{eq:5.3.201811}
\int_0^{2\pi} \left(\frac{\partial f}{\partial \theta}\right)^{2} d\theta
&\geq&  \int_0^{2\pi} f^{2} d\theta - \frac{1}{2\pi} \left(\int_0^{2\pi} f d\theta\right)^{2}.
\end{eqnarray}
By using \eqref{eq:5.3.201811}, one may continue to estimate
\begin{eqnarray}
&& -\frac{2p-1}{2} \int_0^{2\pi}F^\prime \kappa^2 \left(\frac{\partial F}{\partial \theta}\right)^{2p-2}
       \left(\frac{\partial^2 F}{\partial \theta^2}\right)^2 d\theta
\nonumber \\
&& ~~ \leq- \frac{(2p-1) c_1 m^2}{2p^2} \int_0^{2\pi} \left(\frac{\partial F}{\partial \theta}\right)^{2p}  d\theta
\nonumber \\
&& ~~~~+ \frac{(2p-1) c_1 m^2}{4\pi p^2}
    \left[\int_0^{2\pi} \left(\frac{\partial F}{\partial \theta}\right)^p d\theta\right]^2. \label{eq:5.4.201811}
\end{eqnarray}
Substituting \eqref{eq:5.3.201811} and \eqref{eq:5.4.201811} into \eqref{eq:5.2.201811} yields \eqref{eq:5.1.201811}.
\end{proof}

\begin{lemma}\label{lem:5.2.201811}
Both $\int_0^{2\pi} \left(\frac{\partial F}{\partial \theta}\right)^4 d\theta$ and $\int_0^{2\pi} \left(\frac{\partial F}{\partial \theta}\right)^6 d\theta$
have uniform upper bounds independent of time.
\end{lemma}
\begin{proof}
Set $p=2$ in the equation \eqref{eq:5.1.201811}. Then one has
\begin{eqnarray}
\frac{d}{dt}\frac{1}{4} \int_0^{2\pi} \left(\frac{\partial F}{\partial \theta}\right)^{4} d\theta
&\leq &  - \frac{3 c_1 m^2}{8} \int_0^{2\pi} \left(\frac{\partial F}{\partial \theta}\right)^{4} d\theta
\nonumber \\
&& + \frac{3 c_1 m^2}{16 \pi} \left[\int_0^{2\pi} \left(\frac{\partial F}{\partial \theta}\right)^2 d\theta \right]^2
\nonumber \\
&& +\frac{3 C_1 M^2 F(M)^2}{2} \int_0^{2\pi} \left(\frac{\partial F}{\partial \theta}\right)^2 d\theta. \label{eq:5.5.201811}
\end{eqnarray}
Since $\int_0^{2\pi} \left(\frac{\partial F}{\partial \theta}\right)^2 d\theta$ is bounded uniformly on time interval $[0, +\infty)$
(see \eqref{eq:3.2.201811}), the inequality \eqref{eq:5.5.201811} implies that
\begin{eqnarray*}
\frac{d}{dt}\frac{1}{4} \int_0^{2\pi} \left(\frac{\partial F}{\partial \theta}\right)^{4} d\theta
\leq   - \frac{3 c_1 m^2}{8} \int_0^{2\pi} \left(\frac{\partial F}{\partial \theta}\right)^{4} d\theta + A_0,
\end{eqnarray*}
where $A_0$ is a constant independent of time. Integrating this inequality gives us
\begin{eqnarray*}
\int_0^{2\pi} \left(\frac{\partial F}{\partial \theta}\right)^{4} d\theta \leq  \int_0^{2\pi} \left(\frac{\partial F_0}{\partial \theta}\right)^{4} d\theta
+ \frac{8A_0}{3 c_1 m^2}.
\end{eqnarray*}
The integral $\int_0^{2\pi} \left(\frac{\partial F}{\partial \theta}\right)^4 d\theta$ has a uniformly upper bound independent of time.

Set $p=3$ in the equation \eqref{eq:5.1.201811}. One gets
\begin{eqnarray}
\frac{d}{dt}\frac{1}{6} \int_0^{2\pi} \left(\frac{\partial F}{\partial \theta}\right)^6 d\theta
&\leq &  - \frac{5 c_1 m^2}{18} \int_0^{2\pi} \left(\frac{\partial F}{\partial \theta}\right)^6 d\theta
\nonumber \\
&& + \frac{5 c_1 m^2}{36 \pi} \left[\int_0^{2\pi} \left(\frac{\partial F}{\partial \theta}\right)^3 d\theta \right]^2
\nonumber \\
&& +\frac{5 C_1 M^2 F(M)^2}{2} \int_0^{2\pi} \left(\frac{\partial F}{\partial \theta}\right)^4 d\theta. \label{eq:5.6.201811}
\end{eqnarray}
By Holder's inequality, one may estimate
\begin{eqnarray*}
\int_0^{2\pi} \left(\frac{\partial F}{\partial \theta}\right)^3 d\theta
\leq
(2\pi)^{1/4} \left[\int_0^{2\pi} \left(\frac{\partial F}{\partial \theta}\right)^4 d\theta \right]^{3/4}.
\end{eqnarray*}
So the integral $\int_0^{2\pi} \left(\frac{\partial F}{\partial \theta}\right)^3 d\theta$ also has a uniformly upper bound independent of time.
Now \eqref{eq:5.6.201811} implies
\begin{eqnarray}
\frac{d}{dt}\frac{1}{6} \int_0^{2\pi} \left(\frac{\partial F}{\partial \theta}\right)^6 d\theta
\leq  - \frac{5 c_1 m^2}{18} \int_0^{2\pi} \left(\frac{\partial F}{\partial \theta}\right)^6 d\theta + B_0,
\end{eqnarray}
where $B_0$ is a constant independent of time. Integrating this inequality gives us a uniformly upper bound of
the integral $\int_0^{2\pi} \left(\frac{\partial F}{\partial \theta}\right)^6 d\theta$.
\end{proof}

\begin{theorem}\label{thm:5.3.201811}
Under the global flow \eqref{eq:2.1.201811}, the derivative $|\frac{\partial F(\kappa (\theta, t))}{\partial \theta}|$ has an upper bound on the domain $[0, 2\pi] \times (0, +\infty)$.
\end{theorem}
\begin{proof}
Recalling in the Subsection \ref{sec:2.4.201811}, we have shown that, for small $\varepsilon >0$,
\begin{eqnarray*}
\frac{d}{dt}\frac{1}{2}\int_0^{2\pi} \left(\frac{\partial^2 F}{\partial \theta^2}\right)^2 d\theta
& \leq & \int_0^{2\pi} \left(A_1\varepsilon - F^\prime \kappa^2\right)\left(\frac{\partial^3 F}{\partial \theta^3}\right)^2 d\theta
 +A_2 \int_0^{2\pi}\left(\frac{\partial F}{\partial \theta}\right)^6 d\theta
\\
&& + A_3 \int_0^{2\pi} \left(\frac{\partial F}{\partial \theta}\right)^2 d\theta,
\end{eqnarray*}
where $A_1= A_1(c_1, C_1, C_2), A_2= A_2(\varepsilon, c_1, C_1, C_2)$ and $A_3= A_3(\varepsilon, c_1, C_1, C_2)$ are constants independent of time.

Applying Wirtinger's inequality \eqref{eq:5.3.201811}, one gets
\begin{eqnarray*}
\int_0^{2\pi} \left(\frac{\partial^3 F}{\partial \theta^3}\right)^2 d\theta
\geq \int_0^{2\pi} \left(\frac{\partial^2 F}{\partial \theta^2}\right)^2 d\theta.
\end{eqnarray*}
Choosing $\varepsilon$ small enough, one obtains from Lemma \ref{lem:5.2.201811},
\begin{eqnarray*}
\frac{d}{dt}\frac{1}{2}\int_0^{2\pi} \left(\frac{\partial^2 F}{\partial \theta^2}\right)^2 d\theta
& \leq & - \frac{c_1 m^2}{2} \int_0^{2\pi} \left(\frac{\partial^2 F}{\partial \theta^2}\right)^2 d\theta  +B_1,
\end{eqnarray*}
where $B_1$ is a constant independent of time. Integrating this inequality yields an upper bound of
$\int_0^{2\pi} \left(\frac{\partial^2 F}{\partial \theta^2}\right)^2 d\theta$ on time interval $(0, +\infty)$.

Since both the integrals $\int_0^{2\pi} \left(\frac{\partial F}{\partial \theta}\right)^2 d\theta$ and $\int_0^{2\pi} \left(\frac{\partial^2 F}{\partial \theta^2}\right)^2 d\theta$
are bounded for $t\in [0, +\infty)$, Sobolev's inequality tells us that the derivative
$|\frac{\partial F}{\partial \theta}|$ is also uniformly bounded.
\end{proof}

~\\
\textbf{Acknowledgments}
Laiyuan Gao is supported by National Natural Science Foundation of China (No.11801230).
Shengliang Pan is supported by National Natural Science Foundation of China (No.11671298 and No.12071347).
Laiyuan Gao would like to thank Professor Zhuan Ye at Jiangsu Normal University for the inspirational talks with him.
And Gao also thanks professors Michael Gage, Richard Hamilton, Dong-Ho Tsai and Xiaoliu Wang, because he learnt a tremendous amount
during past many years, both from them and from their papers, such as \cite{Gage-1986, Gage-Hamilton, Wang-Tsai-2015} and so on.

{\bf Laiyuan Gao}

School of Mathematics and Statistics, Jiangsu Normal University.

No.101, Shanghai Road, Xuzhou 221116, Jiangsu, China.

Email: lygao@jsnu.edu.cn\\

{\bf Shengliang Pan}

School of Mathematical Sciences, Tongji University.

No.1239, Siping Road, Yangpu 200092, Shanghai, China.

Email: slpan@tongji.edu.cn \\


\begin{thebibliography}{99}
\bibitem{Andrews-1994} B. Andrews,
Harnack inequalities for evolving hypersurfaces,
Mathematische Zeitschrift, 217 (1994), no. 1, 179-197.

\bibitem{Andrews-1998} B. Andrews,
Evolving convex curves,
Cal. Var. PDEs, 7 (1998), 315-371.

\bibitem{Andrews-2003} B. Andrews,
Classification of limiting shapes for isotropic curve flows,
J. Am. Math. Soc. 16 (2003), 443-459.

\bibitem{Andrews-2007} B. Andrews,
Pinching estimates and motion of hypersurfaces by curvature functions,
Journal f\"{u}r die reine und angewandte Mathematik (Crelles Journal), 608 (2007), 17-33.

\bibitem{Andrews-McCoy-2012} B. Andrews \& J. McCoy,
Convex hypersurfaces with pinched principal curvatures and flow of convex hypersurfaces by high powers of curvature,
Trans. Amer. Math. Soc. 364 (2012), no. 7, 3427-3447.

\bibitem{Chou-1985}  K. S. Chou (K. S. Tso),
Deforming a hypersurface by its Gauss-Kronecker curvature,
Comm. Pure Appl. Math. 38 (1985), 867-882.

\bibitem{Chou-Zhu-2001}  K. S. Chou \& X. P. Zhu,
The Curve Shortening Problem.
CRC Press, Boca Raton, FL, 2001.

\bibitem{Gage-1983} M. E. Gage,
An isoperimetric inequality with applications to curve shortening,
Duke Math. J. 509 (1983), no. 4, 1225-1229.

\bibitem{Gage-1984}  M. E. Gage,
Curve shortening makes convex curves circular,
Invent. Math. 76 (1984), no. 2, 357-364.

\bibitem{Gage-1986}  M. E. Gage,
On an area-preserving evolution equation for plane curves,
in ``Nonlinear Problems in Geometry'' (D. M. DeTurck edited).
Contemp. Math. 51 (1986), 51-62.


\bibitem{Gage-Hamilton} M. E. Gage \& R. S. Hamilton,
The heat equation shrinking convex plane curves,
J. Differential Geom. 23 (1986), no. 1, 69-96.


\bibitem{Gao-Pan-Tsai-2021} L. Y. Gao, S. L. Pan, D.-H. Tsai,
On an area-preserving inverse curvature flow of convex closed plane curves,
J. Funct. Anal., Vol. 280(8) (2021), 108931.

\bibitem{Gao-Pan-Tsai-2020-1} L. Y. Gao, S. L. Pan, D.-H. Tsai,
Nonlocal Flow Driven by the Radius of Curvature with Fixed Curvature Integral,
J. Geom. Anal., Vol.30 (2020), 2939-2973.

\bibitem{Gao-Pan-Tsai-2020-2} L. Y. Gao, S. L. Pan, D.-H. Tsai,
On a length-preserving inverse curvature flow of convex closed plane curves,
J. Differential Equations  269  (2020),  no. 7, 5802-5831.

\bibitem{Gao-Zhang-2019} L. Y. Gao \& Y. T. Zhang,
On Yau's problem of evolving one curve to another: convex case,
J. Differential Equations, 266 (2019), no. 1, 179-201.



\bibitem{Gerhardt-1990} C. Gerhardt,
Flow of nonconvex hypersurfaces into spheres,
J. Differential Equations 32 (1990), 299-314.

\bibitem{Grayson-1987} M. Grayson,
The heat equation shrinks embedded plane curve to round points,
J. Differential Geom. 26 (1987), no. 2, 285-314.


\bibitem{Jiang-Pan-2008} L. S. Jiang \& S. L. Pan,
On a non-local curve evolution problem in the plane,
Comm. Anal. Geom. 16 (2008), 1-26.

\bibitem{Lieberman-1996} G. M. Lieberman,
Second order parabolic differential eqautions,
World Scientific, Singapore 1996.

\bibitem{Lin-Tsai-2012} Y. C. Lin \& D. H. Tsai,
Application of Andrews and Green-Osher inequalities to nonlocal flows of convex plane curves,
J. Evol. Equ. 12 (2012), no. 4, 833-854.

\bibitem{Lin-Tsai-2008} Y. C. Lin \& D. H. Tsai,
On a simple maximum principle technique applied to equations on the circle,
J. Differential Equations 245 (2008), 377-391.


\bibitem{Ma-Cheng-2014} L. Ma \& L. Cheng,
A non-local area preserving curve flow,
Geom. Dedicata. 171 (2014), 231-247.

\bibitem{Ma-Zhu-2012} L. Ma \& A. Q. Zhu,
On a length preserving curve flow,
Monatshefte f\"{u}r Mathematik, 165 (2012), no. 1, 57-78.

\bibitem{Mantegazza-2011} C. Mantegazza,
Lecture notes on mean curvature flow. Vol. 290. Springer Science \& Business Media, 2011.

\bibitem{Mayer-2001} U. F. Mayer,
A singular example for the averaged mean curvature flow,
Experiment. Math.10 (2001), no. 1, 103-107.

\bibitem{Mao-Pan-Wang-2013} Y. Y. Mao, S. L. Pan \& Y. L. Wang,
An area-preserving flow for closed plane curves,
Int. J. Math. 24 (2013), No. 1350029.

\bibitem{Osserman-1979} R. Osserman,
Bonnesen-style isoperimetric inequalities,
Amer. Math. Monthly, 86 (1979), 1-29.


\bibitem{Pan-Yang-2008} S. L. Pan \& J. N. Yang,
On a non-local perimeter-preserving curve evolution problem for convex plane curves,
Manuscripta Math. 127 (2008), 469-484.

\bibitem{Wang-Kong-2014} X. L. Wang \& L. H. Kong,
Area-preserving evolution of non-simple symmetric plane curves,
J. Evol. Equ., 14 (2014) 387-401.

\bibitem{Wang-Tsai-2015} D. H. Tsai \& X. L. Wang,
On length-preserving and area-preserving nonlocal flow of convex closed plane curves,
Calc. Var. Partial Differential Equations, 54 (2015), no. 4, 3603-3622.

\bibitem{Wang-Wo-2014} X. L. Wang \& W. F. Wo,
Length-preserving evolution of non-simple symmetric plane curves,
Math. Methods Appl. Sci., 37 (2014) 808-816.

\bibitem{Wang-Wo-Yang-2018} X. L. Wang, W. F. Wo \& M. Yang,
Evolution of non-simple closed curves in the area-preserving curvature flow,
Proc. Roy. Soc. Edinburgh Sect. A, 148 (2018), 659-668.


\bibitem{Tsai-2018} D. H. Tsai,
On flows that preserve parallel curves and their formation of singularities,
J. Evol. Equ., 18 (2018), 303-321.


\bibitem{Zhu-2002} X. P. Zhu,
Lectures on mean curvature flows,
AMS/IP Stud Adv. Math. 32, 2002.
\end{thebibliography}
\end{document}